%Document Class
\documentclass{TAC}

%Packages 
\usepackage[usenames,dvipsnames]{pstricks}

\usepackage{amscd,amssymb,amsmath,graphicx,verbatim,hyperref}
\usepackage[OT1,T1]{fontenc}

\usepackage[all, cmtip]{xy}
\usepackage{pdfpages}

\usepackage{amsmath,amsfonts,mathabx,tikz,tikz-cd,stmaryrd,url,mathtools, mathrsfs, tensor, pigpen}
\usepackage{hyperref}
\hypersetup{
	colorlinks,
	citecolor=black,
	filecolor=black,
	linkcolor=black,
	urlcolor=black
}

\usetikzlibrary{matrix,arrows,decorations.pathmorphing}

\newcommand{\po}{\ar@{}[dr]|{\text{\pigpenfont R}}}
\newcommand{\pb}{\ar@{}[dr]|{\text{\pigpenfont J}}}
\newcommand{\fpb}{\ar@{}[dr]|{\text{fakepb}}}

\newcommand{\lra}{\longrightarrow}
\newcommand{\Ra}{\Rightarrow}

\newcommand{\ldual}[1]{\mathord{{\let\nolimits\relax\sideset{^\wedge}{}{#1}}}}
\newcommand{\laction}[2]{\mathord{{\let\nolimits\relax\sideset{^{#1}}{}{#2}}}}
\newcommand{\conj}[2]{\mathord{{\let\nolimits\relax\sideset{^{#1}}{}{#2}}}}

\newcommand{\xra}{\xrightarrow}
\newcommand{\xla}{\xleftarrow}

\newcommand{\xRa}{\xRightarrow}

%% Script Shortcuts 

\def\CE{{\mathscr E}}

\def\CK{{\mathscr K}}

\def\CM{{\mathscr M}}

\makeatletter
\newcommand*\bigcdot{\mathpalette\bigcdot@{.5}}
\newcommand*\bigcdot@[2]{\mathbin{\vcenter{\hbox{\scalebox{#2}{$\m@th#1\bullet$}}}}}
\makeatother

% From Richard Garner for vertical 2-cells

\newcommand{\twocong}[2][0.5]{\ar@{}[#2] \save ?(#1)*{\cong}\restore}
\newcommand{\twoeq}[2][0.5]{\ar@{}[#2] \save ?(#1)*{=}\restore}
\newcommand{\ltwocell}[3][0.5]{\ar@{}[#2] \ar@{=>}?(#1)+/r 0.2cm/;?(#1)+/l 0.2cm/^{#3}}
\newcommand{\rtwocell}[3][0.5]{\ar@{}[#2] \ar@{=>}?(#1)+/l 0.2cm/;?(#1)+/r 0.2cm/^{#3}}
\newcommand{\utwocell}[3][0.5]{\ar@{}[#2] \ar@{=>}?(#1)+/d  0.2cm/;?(#1)+/u 0.2cm/_{#3}}
\newcommand{\dtwocell}[3][0.5]{\ar@{}[#2] \ar@{=>}?(#1)+/u  0.2cm/;?(#1)+/d 0.2cm/^{#3}}
\newcommand{\ultwocell}[3][0.5]{\ar@{}[#2] \ar@{=>}?(#1)+/dr  0.2cm/;?(#1)+/ul 0.2cm/^{#3}}
\newcommand{\urtwocell}[3][0.5]{\ar@{}[#2] \ar@{=>}?(#1)+/dl  0.2cm/;?(#1)+/ur 0.2cm/^{#3}}
\newcommand{\dltwocell}[3][0.5]{\ar@{}[#2] \ar@{=>}?(#1)+/ur  0.2cm/;?(#1)+/dl 0.2cm/^{#3}}
\newcommand{\drtwocell}[3][0.5]{\ar@{}[#2] \ar@{=>}?(#1)+/ul  0.2cm/;?(#1)+/dr 0.2cm/^{#3}}

\begin{document}
\title{Variation on a comprehensive theme}
\author{Ross Street} 
\address{Centre of Australian Category Theory \\
Department of Mathematics and Statistics \\
Macquarie University, NSW 2109 \\ 
Australia}
\eaddress{ross.street@mq.edu.au} 
\thanks{The author gratefully acknowledges the support of Australian Research Council Discovery Grant DP190102432.}
\keywords{factorization system; bicategory; fibration; final functor} 
\amsclass{18B10, 18D05}
\dedication{Dedicated to the memory of R.F.C. (Bob) Walters.}
\date{\small{\today}}
\maketitle

\begin{abstract}
\noindent The main result concerns a bicategorical factorization system on the bicategory $\mathrm{Cat}$ 
of categories and functors. Each functor $A\xra{f} B$ factors up to isomorphism as $A\xra{j}E\xra{p}B$
where $j$ is what we call an ultimate functor and $p$ is what we call a groupoid fibration.
Every right adjoint functor is ultimate. Functors whose ultimate factor is a right adjoint are shown to
have bearing on the theory of polynomial functors.
\end{abstract}

\tableofcontents

\section*{Introduction}

As an undergraduate I came across Russell \cite{Russell} and was quite disturbed by the state of foundations
for mathematics. 
The comprehension schema seemed central as a connection between mathematics and language.
Then I was happy with the breakthrough I saw in the papers \cite{Law1965, Law1969, Law1970} of Lawvere.

The factorization described here is an old idea I have been meaning to check thoroughly and write up but only now have found a reason to do so.
The reason relates to $\mathrm{Cat}$ as an example of a polynomic bicategory in the sense of my recent paper \cite{134}. We want to define a property of a functor in terms of one of its factors being special in some way.   
 
The idea for the present paper is a variant of the comprehensive factorization of a functor $A\xra{f} B$ as a
composite $A\xra{j} E\xra{p} B$ where $j$ is a final functor (in the sense of \cite{CWM} and used by Walters and the author in \cite{6} but sometimes called cofinal)
and $p$ is a discrete fibration. 
The name for the factorization system was chosen because of its relationship to the comprehension scheme for sets.
This is an orthogonal factorization system in the usual sense
on $\mathrm{Cat}$ as an ordinary category and in the enriched sense on $\mathrm{Cat}$ as
a (strict) 2-category. Here ``discrete'' means, of course, that the fibres of $p$ are sets. 

Now we wish to think of $\mathrm{Cat}$ as a bicategory and consider whether we obtain
a factorization system in a bicategorical sense when we behave totally bicategorically and
close our fibrations up under composition with equivalences and ask that the pseudofibres
be groupoids.

This works. Our proof models the proof of the usual comprehensive factorization 
as described by Verity and the author in \cite{104}. The final functors are replaced by 
what we call {\em ultimate}
functors and the discrete fibrations by what we call {\em groupoid fibrations}. In our
application, we are concerned with functors whose ultimate factor is a right adjoint.

I am grateful to Alexander Campbell
for pointing to the significantly related work of Joyal where $n$-final, $n$-fibration 
and homotopy factorization system are defined in the context of quasicategories; 
see page 170 of \cite{JoyV1} and Sections A.6-8 of \cite{JoyV2}.  

\section{Groupoid fibrations}

The following concept is called ``strongly cartesian'' by Grothendieck.
These morphisms are always closed under composition (unlike those he called ``cartesian'').

\begin{definition}\label{cartesianmor}
Let $p : E \to B$ be a functor. A morphism $\chi : e' \to e$ in $E$ is called {\em cartesian} for $p$ when the square \eqref{cart} is a pullback for all $k\in E$.
\begin{equation}\label{cart}
\begin{aligned}
\xymatrix{
E(k,e') \ar[rr]^-{E(k,\chi)} \ar[d]_-{p} && E(k,e) \ar[d]^-{p} \\
B(pk,pe') \ar[rr]_-{B(pk,p\chi)} && B(pk,pe)}
\end{aligned}
\end{equation}
\end{definition}

Since any commutative square with a pair of opposite sides invertible is a pullback, we see that all invertible morphisms in $E$ are cartesian, and, if $p$ is fully faithful, then all morphisms of $E$ are cartesian.

\begin{definition}\label{gpdfib}
The functor $p : E \to B$ is a {\em groupoid fibration} when 
\begin{itemize}
\item[(i)] for all $e\in E$
and $\beta : b \to pe$ in $B$, there exist $\chi : e'\to e$ in $E$
and invertible $b\cong pe'$ such that  $\beta = (b\cong pe' \xra{p\chi} pe)$, and
\item[(ii)] every morphism of $E$ is cartesian for $p$.  
\end{itemize}
\end{definition}

Our groupoid fibrations include all equivalences of categories and so are not necessarily fibrations in the sense of Grothendieck.

From the pullback \eqref{cart} it follows that groupoid fibrations are conservative (that is, reflect invertibility). So their pseudofibres $E_b$ are groupoids. 

For functors $A\xra{f}C\xla{g}B$, we write $f/g$ for the comma category (or slice) of $f$ and $g$; it is the top left 
vertex of a universal square
\begin{equation}\label{commasq}
\begin{aligned}
\xymatrix{
f/g \ar[d]_{s}^(0.5){\phantom{aaaa}}="1" \ar[rr]^{t}  && B \ar[d]^{g}_(0.5){\phantom{aaaa}}="2" \ar@{=>}"1";"2"^-{\lambda}
\\
A \ar[rr]_-{f} && C 
}
\end{aligned}
\end{equation}
in the bicategory $\mathrm{Cat}$.  
In particular, the arrow category of $E$ is $E^{\mathbf{2}} = 1_E/1_E = E/E$.
For a functor $E\xra{p}B$ and writing $B/p = 1_B/p$, there is a canonical functor
$E^{\mathbf{2}}\xra{r} B/p$ defined as follows.
$$
\xymatrix{
E^{\mathbf{2}} \ar@/_/[ddr]_{ps} \ar@/^/[drrr]^t
\ar@{.>}[dr]|-{r} \\
& B/p \ar[d]_{u}^(0.5){\phantom{aaaa}}="1" \ar[rr]^h
&& E \ar[d]^{p}_(0.5){\phantom{aaaa}}="2" \ar@{=>}"1";"2"^-{\lambda} \\
& B \ar[rr]_{1_B} && B } \\
\quad
\xymatrix{
 \\ & & \\
& \Huge{=} } \\
\quad
\xymatrix{
\\
E^{\mathbf{2}} \ar[d]_{p s}^(0.5){\phantom{aaaa}}="1" \ar[rr]^{t}  && B \ar[d]^{p}_(0.5){\phantom{aaaa}}="2" \ar@{=>}"1";"2"^-{p \lambda}
\\
B \ar[rr]_-{1_B} && B 
}
$$

We write $f/_{\mathrm{ps}}g$ for the full subcategory of the comma category $f/g$ of \eqref{commasq} consisting of those objects at which the component of $\lambda$ is invertible. It is called the 
{\em pseudopullback} or
{\em isocomma category} of the cospan $A\xra{f}C\xla{g}B$;
it is the top left vertex of a universal square
\begin{equation}\label{pspbsq}
\begin{aligned}
\xymatrix{
f/_{\mathrm{ps}}g \ar[d]_{s'}^(0.5){\phantom{aaaa}}="1" \ar[rr]^{t'}  && B \ar[d]^{g}_(0.5){\phantom{aaaa}}="2" \ar@{=>}"1";"2"^-{\lambda'}_{\cong}
\\
A \ar[rr]_-{f} && C 
}
\end{aligned}
\end{equation}
in the bicategory $\mathrm{Cat}$. 

Here are three fairly easy observations.

\begin{proposition}\label{3feo}
\begin{itemize}
\item[(a)] A functor $E\xra{p}B$ is a groupoid fibration if and only if the canonical $E^{\mathbf{2}}\xra{r} B/p$ is an equivalence.
\item[(b)] Suppose $E\xra{p}B$ is a groupoid fibration. A functor $F\xra{q}E$ is a groupoid fibration if and only if the composite $F\xra{q}E\xra{p}B$ is.
\item[(c)] The pseudopullback of a groupoid fibration along any functor is a groupoid fibration.
That is, if in \eqref{pspbsq} the functor $g$ is a groupoid fibration, so too is $s'$.
\end{itemize}
\end{proposition}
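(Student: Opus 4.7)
My plan is to dispose of each of the three parts by a direct unpacking of the definitions.

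For part (a), the canonical functor $r\colon E^{\mathbf{2}} \to B/p$ sends a morphism $\chi\colon e' \to e$ of $E$ (thought of as an object of $E^{\mathbf{2}}$) to the object $(pe', e, p\chi)$ of $B/p$, and sends a commutative square $(f,g)\colon \chi \to \tilde\chi$ in $E$ to the pair $(pf, g)$. Essential surjectivity of $r$ then unpacks to Definition~\ref{gpdfib}(i), after the observation that isomorphisms in $B/p$ decompose componentwise, so one can always take the $E$-side of any such isomorphism to be an identity. Full faithfulness of $r$, tested on inputs $\chi = 1_{e'}$ and varying $e'$, translates to the assertion that every $\tilde\chi$ is $p$-cartesian, which is Definition~\ref{gpdfib}(ii). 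Since a functor is an equivalence iff it is essentially surjective and fully faithful, (a) follows.

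For part (b), I would lean on the pasting lemma for pullback squares. The $(pq)$-cartesian square \eqref{cart} for a morphism $\chi$ of $F$ factors vertically as the $q$-cartesian square for $\chi$ sitting atop the $p$-cartesian square for $q\chi$. The bottom square is automatically a pullback since $p$ is a groupoid fibration, so pasting makes $q$-cartesianness of $\chi$ equivalent to $(pq)$-cartesianness of $\chi$; this handles condition (ii) in both directions. For condition (i) in the forward direction, one lifts $\beta$ first through $p$ and then through $q$. For the backward direction, given $\psi\colon e' \to qf$ in $E$, I would lift $p\psi$ through $pq$ to obtain $\chi\colon f' \to f$ in $F$, then use $p$-cartesianness of $q\chi$ to produce a unique $\phi\colon e' \to qf'$ with $q\chi \cdot \phi = \psi$ and $p\phi$ invertible; invoking conservativity of $p$ (noted immediately after Definition~\ref{gpdfib}) then promotes $\phi$ itself to an isomorphism in $E$.

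For part (c), with $g$ a groupoid fibration, I would verify conditions (i) and (ii) for $s'\colon f/_{\mathrm{ps}} g \to A$ directly. To lift $\alpha\colon a \to a'$ at $(a', b', \gamma')$, apply (i) for $g$ to the composite $\gamma' \cdot f\alpha\colon fa \to gb'$, producing $\chi_B\colon b_1 \to b'$ in $B$ together with an isomorphism $\gamma_1\colon fa \cong gb_1$ satisfying $\gamma' \cdot f\alpha = g\chi_B \cdot \gamma_1$; then $(\alpha, \chi_B)\colon (a, b_1, \gamma_1) \to (a', b', \gamma')$ is a well-defined morphism of $f/_{\mathrm{ps}} g$ projecting via $s'$ to $\alpha$. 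For (ii), given $(\alpha, \chi_B)$ and a test object $(k, m, \kappa)$ of $f/_{\mathrm{ps}} g$, the required pullback \eqref{cart} for $s'$ demands a unique $B$-component completing the diagram; the invertibility of $\kappa$ and the defining equation of $(\alpha, \chi_B)$ as a morphism of $f/_{\mathrm{ps}} g$ reduce this to an instance of the $g$-cartesianness of $\chi_B$, which holds by hypothesis.

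I expect the subtlest step to be the backward direction of (b), which requires extracting an isomorphism in $E$ from data that a priori lives only in $B$; the argument rests on the interplay between the cartesian pullback~\eqref{cart} for $q\chi$ and the conservativity of groupoid fibrations.
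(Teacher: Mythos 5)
The paper offers no proof of Proposition~\ref{3feo} at all --- it is introduced merely as ``three fairly easy observations'' --- so there is nothing to compare your argument against; what matters is whether your unpacking is sound, and it is. Part (a) is right: essential surjectivity of $r$ is condition (i) (your remark that one may normalize the $E$-component of an isomorphism in $B/p$ to an identity by absorbing it into $\chi$ is exactly the needed adjustment), and full faithfulness corresponds to condition (ii). The only compressed step is that testing full faithfulness on $\chi=1_{e'}$ gives precisely the cartesianness of each $\tilde\chi$, which yields one implication; for the converse you still need the one-line observation that a general hom-square $E^{\mathbf 2}(\chi,\tilde\chi)\to B/p(r\chi,r\tilde\chi)$ is bijective because the lift $f$ is obtained by factoring $g\chi$ cartesianly through $\tilde\chi$ over $u$ --- worth stating explicitly. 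Part (b) is clean: the vertical pasting of the $q$-cartesian square for $\chi$ on top of the $p$-cartesian square for $q\chi$ (a pullback by hypothesis) settles condition (ii) in both directions, and your backward argument for condition (i) --- lift $p\psi$ through $pq$, factor through the cartesian $q\chi$, then invoke conservativity of $p$ to upgrade the comparison $\phi$ to an isomorphism --- is exactly the subtle point and is handled correctly. Part (c) checks out as well: the compatibility computation $g\chi_B\circ(\gamma_1\circ fu\circ\kappa^{-1})=g\tau$ reduces the cartesianness of $(\alpha,\chi_B)$ for $s'$ to that of $\chi_B$ for $g$, using invertibility of the structure isomorphisms of the isocomma category. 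In short, your proof supplies correct details the paper chose to omit.
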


There is a 2-category $\mathrm{GFib}B$ of groupoid fibrations over $B$ defined as follows:
The objects are groupoid fibrations $E\xra{p}B$ over $B$. 
The hom categories are given by the pseudopullbacks:
 \begin{equation*}
 \begin{aligned}
\xymatrix{
\mathrm{GFib}B(p,q) \ar[d]_{}^(0.5){\phantom{AAAAAA}}="1" \ar[rr]^{}  && [E,F] \ar[d]^{[E,q]}_(0.5){\phantom{AAAAAA}}="2" \ar@{<=}"1";"2"^-{\cong}_-{}
\\
\mathbf{1} \ar[rr]_-{\lceil p\rceil} && [E,B] 
}
 \end{aligned}
\end{equation*}
So the morphisms are triangles with a natural isomorphism therein. 
\begin{equation}\label{morphoverB}
\begin{aligned}
\xymatrix{
E \ar[rd]_{p}^(0.5){\phantom{a}}="1" \ar[rr]^{f}  && F \ar[ld]^{q}_(0.5){\phantom{a}}="2" \ar@{=>}"1";"2"^-{\phi}_{\cong}
\\
& B 
}
\end{aligned}
\end{equation}

We also consider $\mathrm{Cat}/B$ with the same convention on its morphisms.

\section{Some fully faithful right adjoint functors}
\begin{equation}\label{Fffff}
\begin{aligned}
\xymatrix{
& & \mathrm{Ord} \ar[rd]^-{ \mathrm{incl}}  & &\\
\mathrm{Set}\ar[r]^-{\sim} &\mathrm{EqR} \ar[ru]^-{\mathrm{incl}} \ar[rd]_-{\mathrm{incl}} & & \mathrm{Cat}\ar[r]^-{\mathrm{nerve}} & [\Delta^{\mathrm{op}},\mathrm{Set}] \\
& & \mathrm{Gpd} \ar[ru]_-{\mathrm{incl}} & &}
\end{aligned}
\end{equation}

All the categories in the diagram \eqref{Fffff} are cartesian closed.
All the functors are ``closed under exponentiation''.
The left adjoints all preserve finite products (by Day Reflection Theorem).
Our focus here is on the inclusion $\mathrm{Gpd}\xra{\mathrm{incl}}\mathrm{Cat}$ with left 2-adjoint $\pi_1$ and right adjoint $\upsilon$.
The subcategory $\upsilon A$ of the category $A$ contains all and only the invertible morphisms of $A$. 

\begin{lemma}\label{upsfun}
A functor $E\xra{p}B$ is an equivalence if and only if both $\upsilon E\xra{\upsilon p}\upsilon B$ and $\upsilon (E^{\mathbf{2}})\xra{\upsilon (p^{\mathbf{2}})}\upsilon (B^{\mathbf{2}})$ are equivalences.
\end{lemma}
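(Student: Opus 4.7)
\medskip

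The plan is to treat the two directions separately, reducing the harder one to checking the three standard conditions --- essentially surjective, faithful, and full --- for $p$ to be an equivalence.

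The forward implication is immediate: $\upsilon$ is the right 2-adjoint to the inclusion $\mathrm{Gpd} \hookrightarrow \mathrm{Cat}$ and is therefore a 2-functor, the cotensor $(-)^{\mathbf{2}}$ is likewise a 2-functor on $\mathrm{Cat}$, and 2-functors preserve equivalences.

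For the reverse implication, assume $\upsilon p$ and $\upsilon(p^{\mathbf{2}})$ are equivalences. Essential surjectivity of $p$ follows at once from essential surjectivity of $\upsilon p$, since isomorphisms in $\upsilon B$ are exactly isomorphisms in $B$. For faithfulness, given $\alpha_1, \alpha_2 : e_1 \to e_2$ with $p\alpha_1 = p\alpha_2$, I would view each $\alpha_i$ as an object of $E^{\mathbf{2}}$ and note that the pair $(1_{pe_1}, 1_{pe_2})$ is an isomorphism $p\alpha_1 \to p\alpha_2$ in $\upsilon(B^{\mathbf{2}})$; fully faithfulness of $\upsilon(p^{\mathbf{2}})$ then furnishes a unique isomorphism $(u, v) : \alpha_1 \to \alpha_2$ in $E^{\mathbf{2}}$ with $pu = 1$, $pv = 1$, and $v\alpha_1 = \alpha_2 u$, and faithfulness of $\upsilon p$ collapses $u$ and $v$ to identities, yielding $\alpha_1 = \alpha_2$. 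For fullness, given $\beta : pe_1 \to pe_2$, I would apply essential surjectivity of $\upsilon(p^{\mathbf{2}})$ to the object $\beta$ of $B^{\mathbf{2}}$ to obtain some $\alpha : e_1' \to e_2'$ together with isomorphisms $u : pe_1' \to pe_1$ and $v : pe_2' \to pe_2$ satisfying $v(p\alpha) = \beta u$; lifting $u$ and $v$ uniquely through fully faithfulness of $\upsilon p$ to isos $\tilde u, \tilde v$ in $E$, the composite $\tilde v \, \alpha \, \tilde u^{-1} : e_1 \to e_2$ maps to $\beta$ under $p$.

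The main obstacle is purely conceptual: fully faithfulness of $p$ on morphisms that need not be invertible cannot come from $\upsilon p$ alone, but must be extracted by re-encoding arbitrary morphisms of $E$ as objects of the arrow category $E^{\mathbf{2}}$, where the groupoidal information packaged in $\upsilon(p^{\mathbf{2}})$ becomes sufficient to control them.
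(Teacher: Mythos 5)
Your proposal is correct and follows essentially the same route as the paper: essential surjectivity read off from $\upsilon p$, fullness of $p$ obtained from essential surjectivity of $\upsilon(p^{\mathbf{2}})$ combined with fullness of $\upsilon p$, and faithfulness obtained from fullness of $\upsilon(p^{\mathbf{2}})$ combined with the observation that faithfulness of $\upsilon p$ forces automorphisms sent to identities to be identities. The only differences are cosmetic (order of the fullness and faithfulness steps, and invoking full faithfulness where fullness alone suffices).
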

\begin{proof}
Only if is clear since $\upsilon$ is a 2-functor. 
For the converse first note that surjectivity on objects up to isomorphism for $p$ is the same as for $\upsilon p$.

So it remains to deduce from the groupoid equivalences that $p$ is fully faithful. 
Take $e, e'\in E$ and $pe \xra{\beta} pe'$ in $B$. Since $\upsilon (B^{\mathbf{2}})$ is surjective on objects up to isomorphism, there exists $e_1 \xra{\xi} e'_1$ in $E$ and a commutative square
\begin{equation*}
\xymatrix{
pe_1 \ar[r]^-{\sigma}_{\cong} \ar[d]_-{p\xi} & pe \ar[d]^-{\beta} \\
pe'_1 \ar[r]_-{\sigma'}^{\cong} & pe' \ .} 
\end{equation*}
Since $\upsilon p$ is full, there exist invertible $e_1 \xra{\chi} e$ and $e'_1 \xra{\chi'} e'$ in $E$
such that $p\chi = \sigma$ and $p\chi' = \sigma'$. Consequently, $\beta = p(\chi' \xi \chi^{-1})$ proving
that $p$ is full.

Since $\upsilon p$ is faithful, the only automorphisms in $E$ taken to identities by $p$ are identities.
We will use this special case in our proof now that $p$ is faithful. Take $\xi, \xi' : e\to e_1$ in $E$ 
with $p\xi = p\xi'$. Think of these two morphisms as objects of $E^{\mathbf{2}}$ which are taken
to two equal objects $p\xi, p\xi' : pe\to pe_1$ of $B^{\mathbf{2}}$. Since $p^{\mathbf{2}}$ is full,
the two objects $\xi$ and $\xi'$ are isomorphic by an isomorphism in $E^{\mathbf{2}}$ made up of 
automorphisms of $e$ and $e_1$ which are taken to identities by $p$.
Since those automorphisms must be identities, we deduce that $\xi=\xi'$, as required.    
\end{proof}

\begin{lemma}\label{upsgfib}
If $E\xra{p}B$ is a groupoid fibration and $\upsilon E\xra{\upsilon p}\upsilon B$ is an equivalence then $E\xra{p}B$ is an equivalence.
\end{lemma}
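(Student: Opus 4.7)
The plan is to reduce to Lemma~\ref{upsfun}. Since $\upsilon p$ is already given to be an equivalence, it remains to prove that $\upsilon(p^{\mathbf{2}}):\upsilon(E^{\mathbf{2}})\to\upsilon(B^{\mathbf{2}})$ is an equivalence.

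To exploit the groupoid fibration hypothesis, I would invoke Proposition~\ref{3feo}(a), which supplies an equivalence $r: E^{\mathbf{2}}\xra{\sim} B/p$ with $r(e\xra{\xi}e') = (pe, e', p\xi)$. Let $\tilde{p}: B/p \to B^{\mathbf{2}}$ be the ``forget $e$'' functor, i.e.\ $(b, e, \beta: b\to pe)\mapsto (b\xra{\beta} pe)$; a short check shows $p^{\mathbf{2}} = \tilde{p}\circ r$ strictly. Since $\upsilon$ preserves equivalences, the task reduces to proving that $\upsilon\tilde{p}:\upsilon(B/p)\to\upsilon(B^{\mathbf{2}})$ is an equivalence.

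For essential surjectivity, given an object $\beta: b\to b'$ of $B^{\mathbf{2}}$, essential surjectivity of $\upsilon p$ produces an isomorphism $\rho: b'\cong pe$ in $B$; then $(b, e, \rho\circ\beta)\in B/p$ has $\tilde{p}$-image isomorphic to $\beta$ via the obvious commutative square with sides $1_b$ and $\rho$.

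For full faithfulness on $\upsilon$, a typical invertible morphism in $\upsilon(B^{\mathbf{2}})$ between the $\tilde{p}$-images of $(b_i, e_i, \beta_i)$ is a commuting square of isomorphisms $(\tau: b_1\cong b_2,\ \sigma: pe_1\cong pe_2)$. Because $\upsilon p$ is in particular fully faithful, $\sigma$ lifts uniquely to an invertible $\chi: e_1\cong e_2$ with $p\chi = \sigma$, and the pair $(\tau,\chi)$ is then the unique isomorphism in $\upsilon(B/p)$ lying above the given one. The only real obstacle I foresee is bookkeeping around $p^{\mathbf{2}} = \tilde{p}\circ r$ together with the observation that only \emph{invertible} morphisms of $B$ need to be lifted through $p$ --- which is precisely why the assumption on $\upsilon p$ alone is enough here, without calling on the full cartesian lifting structure of the groupoid fibration.
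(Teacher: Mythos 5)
Your proof is correct, and its skeleton is the same as the paper's: both reduce to Lemma~\ref{upsfun} by showing that $\upsilon(p^{\mathbf{2}})$ is an equivalence, and both extract that from the groupoid-fibration hypothesis through Proposition~\ref{3feo}(a). Where you differ is in how that middle step is executed. The paper observes that, because $p$ is a groupoid fibration, the square with $\mathrm{cod}:E^{\mathbf{2}}\to E$ over $\mathrm{cod}:B^{\mathbf{2}}\to B$ is a pseudopullback, and then invokes in one line the fact that $\upsilon$, being a right adjoint, preserves pseudopullbacks; since a pseudopullback of the equivalence $\upsilon p$ is an equivalence, $\upsilon(p^{\mathbf{2}})$ is one too. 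You instead make the same pseudopullback explicit: you factor $p^{\mathbf{2}}=\tilde{p}\circ r$ through the comma category $B/p$, note that $r$ is an equivalence by Proposition~\ref{3feo}(a), and verify by hand that $\upsilon\tilde{p}$ is essentially surjective and fully faithful using only that $\upsilon p$ is an equivalence. Your checks are sound (in particular the unique lifting of the isomorphism $\sigma:pe_1\cong pe_2$ through the fully faithful $\upsilon p$, and the observation that a morphism of $B/p$ is invertible exactly when both components are). What the paper's route buys is brevity and the reusability of the limit-preservation argument; what yours buys is an elementary, self-contained verification that makes visible exactly which part of the hypothesis is used where --- the groupoid-fibration structure only enters through $r$, and thereafter only the equivalence $\upsilon p$ is needed.
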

\begin{proof}
Since $\upsilon$ is a right adjoint, it preserves the pseudopullback
$$
\xymatrix{
E^{\mathbf{2}} \ar[rr]^-{\mathrm{cod}} \ar[d]_-{p^{\mathbf{2}}} && E \ar[d]^-{p} \\
B^{\mathbf{2}}\ar[rr]^-{\mathrm{cod}} && B}
$$
so that both $\upsilon p$ and $\upsilon (p^{\mathbf{2}})$ are equivalences.
The result follows by Lemma~\ref{upsfun}.
\end{proof}

\begin{proposition}
The usual ``Grothendieck construction'' 2-functor  
\begin{eqnarray*}
\wr  :  \mathrm{Hom}(B^{\mathrm{op}},\mathrm{Gpd}) \lra \mathrm{GFib}B
\end{eqnarray*}
is a biequivalence. If $\ \wr(T) \simeq (E\xra{p} B)$ then $Tb$ is equivalent to the pseudofibre $E_b$ of $p$ over $b\in B$.
\end{proposition}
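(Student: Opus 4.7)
The plan is to construct a quasi-inverse $\Phi : \mathrm{GFib}B \to \mathrm{Hom}(B^{\mathrm{op}},\mathrm{Gpd})$ in the usual ``indexed-category-from-a-fibration'' way. For a groupoid fibration $p : E \to B$ set $\Phi(p)(b) := E_b$, which is a groupoid because of the conservativity of $p$ noted right after Definition~\ref{gpdfib}. For $\beta : b \to b'$ in $B$ and $x \in E_{b'}$, use Definition~\ref{gpdfib}(i) to pick a cartesian lift $\chi_{\beta,x} : \beta^{*}x \to x$ and set $\Phi(p)(\beta)(x) := \beta^{*}x$. The universal property in Definition~\ref{cartesianmor} promotes $\Phi(p)(\beta)$ to a functor in $x$ and furnishes canonical invertible comparison 2-cells $\Phi(p)(\beta'\beta) \cong \Phi(p)(\beta)\Phi(p)(\beta')$ verifying the pseudofunctor axioms. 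The action of $\Phi$ on morphisms and 2-cells of $\mathrm{GFib}B$ is similar: restrict along each pseudofibre inclusion and read off pseudonaturality data from the image of cartesian lifts.

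Essential surjectivity: I would assemble a comparison functor $\kappa_p : \wr\Phi(p) \to E$ over $B$ by sending $(b,x)$ to $x$, with morphisms assembled from the chosen cartesian lifts, and then show $\kappa_p$ is an equivalence in $\mathrm{GFib}B$. By Proposition~\ref{3feo}(b), since both $\wr\Phi(p) \to B$ and $p$ are groupoid fibrations, the morphism $\kappa_p$ over $E$ is itself a groupoid fibration; Lemma~\ref{upsgfib} then reduces the claim to showing that $\upsilon\kappa_p$ is an equivalence. This last step is essentially tautological: on groupoid cores, $\wr\Phi(p)$ is precisely the disjoint union of the groupoids $E_b$ indexed by $b \in \upsilon B$, and $\kappa_p$ acts as the evident inclusion into $\upsilon E$, which is surjective up to isomorphism since every $e \in E$ lies in its own pseudofibre $E_{pe}$.

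For the local equivalence, given pseudofunctors $T,S : B^{\mathrm{op}} \to \mathrm{Gpd}$ I would describe the quasi-inverse of $\wr_{T,S} : \mathrm{Hom}(T,S)(T,S) \to \mathrm{GFib}B(\wr T, \wr S)$ as follows: a morphism $f$ of $\mathrm{GFib}B$ restricts over each $b$ to a functor $\alpha_b : Tb \to Sb$, and the natural comparison $\alpha_b \circ T\beta \cong S\beta \circ \alpha_{b'}$ is extracted by applying $f$ to the chosen cartesian lift of $\beta$ in $\wr T$ and invoking the universal property of the chosen cartesian lift in $\wr S$. Modifications are handled analogously. That this is inverse to $\wr_{T,S}$ up to isomorphism is a direct unpacking of how objects and morphisms of $\wr T$ are built out of cartesian lifts, giving an equivalence of hom-categories; the final clause comparing $Tb$ with $E_b$ is already built into the definition $\Phi(p)(b) = E_b$.

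The principal obstacle is the coherence bookkeeping: the chosen cartesian lifts do not strictly compose, so both the pseudofunctor structure of $\Phi(p)$ and the pseudonaturality of $\Phi(f)$ rely on canonical comparison isomorphisms that must be checked against the standard axioms. The groupoid-fibration hypothesis simplifies this markedly, because Definition~\ref{gpdfib}(ii) makes every morphism of $E$ cartesian, so every candidate coherence 2-cell is determined by its image under $p$ together with its domain and codomain, and thus the axioms reduce to the corresponding strict equalities in $B$.
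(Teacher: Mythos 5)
The paper offers no proof of this proposition at all: it is invoked as ``the usual Grothendieck construction'' and the text moves straight on to transporting the $\pi_1 \dashv \mathrm{incl}$ adjunction along the stated biequivalences. So there is nothing to compare against except the standard argument, which is indeed the route you take: build the quasi-inverse $\Phi$ from pseudofibres and chosen cartesian lifts, and reduce the unit/counit equivalences to checks on groupoid cores via Proposition~\ref{3feo}(b) and Lemma~\ref{upsgfib}. That reduction is a nice use of the paper's own machinery, and the observation that Definition~\ref{gpdfib}(ii) trivializes the coherence bookkeeping is exactly the point of restricting to groupoid fibrations.

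There is, however, a genuine error in your essential-surjectivity step. The core $\upsilon(\wr\Phi(p))$ is \emph{not} the disjoint union of the groupoids $E_b$: a morphism $(b,x)\to(b',x')$ of $\wr\Phi(p)$ lying over an invertible $\beta : b\to b'$ is itself invertible (its fibre component is automatically invertible because the fibres are groupoids), so the core is the Grothendieck construction of the restriction of $\Phi(p)$ to $\upsilon B$, which has morphisms between objects in \emph{different} fibres. Had your description been right, $\upsilon\kappa_p$ would fail to be full whenever $B$ has a non-identity isomorphism, since $e\in E_b$ and $\beta^{*}e'\,$, $e'\in E_{b'}$ can be isomorphic in $\upsilon E$ over $\beta$. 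Relatedly, you only argue that $\upsilon\kappa_p$ is essentially surjective; its full faithfulness --- which is precisely where the cartesianness of the chosen lifts (Definition~\ref{cartesianmor}) enters, via the bijection between morphisms $k\to\beta^{*}x'$ in the fibre and morphisms $k\to x'$ over $\beta$ --- is never checked. Both points are repairable with the correct description of the core, but as written the reduction to ``an evident inclusion'' does not establish the equivalence.
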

The result of applying $\mathrm{Hom}(B^{\mathrm{op}},-)$ to the 2-adjunction 
$$ \xymatrix @C+5mm{
 \mathrm{Cat} \ar @<3pt> [r]^{\pi_1}  &  \mathrm{Gpd} \ar @<3pt>[l]^{\mathrm{incl}}}$$ 
transports to a biadjunction 
$$ \xymatrix @C+5mm{
 \mathrm{Fib}B \ar @<3pt> [r]^{\pi_{1 B}}  &  \mathrm{GFib}B \ar @<3pt>[l]^{\mathrm{incl}}
}$$
via the biequivalences
\begin{eqnarray*}
\mathrm{Hom}(B^{\mathrm{op}},\mathrm{Gpd}) \xra{\sim} \mathrm{GFib}B
\ \text{   and   }  \ \mathrm{Hom}(B^{\mathrm{op}},\mathrm{Cat})  \xra{\sim} \mathrm{Fib}B \ .
\end{eqnarray*}
\begin{remark}\label{overB} The inclusion 2-functor $\mathrm{GFib}B\hookrightarrow \mathrm{Cat}/B$ is fully faithful with a left biadjoint
whose value at the object $A\xra{f}B$ is the groupoid fibration $\pi_{1 B} (B/f\xra{\mathrm{dom}}B)$
which corresponds to the pseudofunctor $B^{\mathrm{op}}\to \mathrm{Gpd}$ taking $b\in B$ to $\pi_{1 } (b/f)$.
\end{remark}

The construction of $\pi_1A$ by generators and relations is awkward to work with; instead we use the following universal property of the coinverter construction. Write $[A,X]_{\cong}$ for the full subcategory of $[A,X]$ consisting of those functors $f : A\to X$ which invert all the morphisms of $A$. 
The adjunction unit $A\to \pi_1A$ induces an isomorphism 
$$[\pi_1A,X]\cong [A,X]_{\cong}$$
for all categories $X$ (not just groupoids).  

\section{Ultimate functors}
\begin{definition}
A functor $j : A \to B$ is called {\em ultimate} when, for all objects
$b\in B$, the fundamental groupoid $\pi_1(b/j)$ of the comma
category $b/j$ is equivalent to
the terminal groupoid:  
$$\pi_1(b/j) \simeq \mathbf{1} \ .$$
\end{definition}

\begin{proposition}\label{erafiu}
Every right adjoint functor is ultimate.
\end{proposition}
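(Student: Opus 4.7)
The plan is to exhibit, for each $b\in B$, an initial object in the comma category $b/j$, and then to argue that any category with an initial object has fundamental groupoid equivalent to $\mathbf{1}$. Composing these two observations immediately yields $\pi_1(b/j)\simeq \mathbf{1}$ for every $b\in B$, which is the definition of $j$ being ultimate.

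For the first step, I would let $\ell\dashv j$ with unit $\eta : 1_B \Rightarrow j\ell$. Given $b\in B$, the pair $(\ell b,\ \eta_b : b\to j\ell b)$ is an object of $b/j$. For any other object $(a,\beta : b\to ja)$, a morphism in $b/j$ from the former to the latter is an arrow $\alpha : \ell b\to a$ with $j\alpha \circ \eta_b = \beta$. The universal property of the unit says there is exactly one such $\alpha$, namely the transpose $\beta^\flat : \ell b\to a$ of $\beta$ under the adjunction. So $(\ell b, \eta_b)$ is initial in $b/j$.

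For the second step, suppose $C$ is any category with an initial object $0$. Then the functor $i : \mathbf{1}\to C$ picking out $0$ is left adjoint to the terminal functor $t : C\to \mathbf{1}$, since the natural bijection $C(i(*),x) = C(0,x)\cong \mathbf{1}(*, t(x))$ is precisely the statement that $0$ is initial. Applying the 2-functor $\pi_1$ yields an adjunction $\pi_1 i \dashv \pi_1 t$ between the groupoids $\mathbf{1}$ and $\pi_1 C$. Since every morphism in a groupoid is invertible, the unit and counit of this adjunction are automatically invertible, so it is an equivalence. Hence $\pi_1 C \simeq \mathbf{1}$.

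There is no serious obstacle here; the only things one needs to be sure of are that $\pi_1$, as the left 2-adjoint of $\mathrm{incl}: \mathrm{Gpd}\hookrightarrow \mathrm{Cat}$, preserves adjunctions, and that adjunctions between groupoids are equivalences. Applying the second step to $C = b/j$ (using the initial object produced in the first step) gives $\pi_1(b/j)\simeq \mathbf{1}$ for each $b\in B$, as required.
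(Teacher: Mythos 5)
Your argument is correct and is essentially the paper's own proof: both exhibit an initial object of $b/j$ coming from the unit of the adjunction (the paper phrases this via the equivalence $b/j\simeq kb/A$ and the initial object $1_{kb}$), observe that an initial object makes $C\to\mathbf{1}$ a right adjoint, and then apply the 2-functor $\pi_1$, using that an adjunction between groupoids is automatically an equivalence. You have merely spelled out the steps that the paper leaves implicit.
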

\begin{proof}
If $k\dashv j : A\to B$ then $b/j \simeq kb/A \to \mathbf{1}$ has a left adjoint owing to the initial object $1_{kb}$ of $kb/A$. Applying the 2-functor $\pi_1$ to the adjunction yields an adjunction between groupoids. 
\end{proof}
\begin{proposition}\label{pi1ufequiv}
Ultimate functors are taken by $\pi_1$ to equivalences.
\end{proposition}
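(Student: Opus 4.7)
The plan is to reduce the claim to the universal property of $\pi_1$ via the biadjunction of Remark~\ref{overB}. Ultimateness forces the value $L(j) := \pi_{1B}(B/j \xra{\mathrm{dom}} B)$ of the left biadjoint to be trivial in $\mathrm{GFib}B$, and then testing the biadjunction against projection groupoid fibrations converts this triviality into the desired equivalence.

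First, I would verify $L(j) \simeq (1_B : B \to B)$ in $\mathrm{GFib}B$. Under the biequivalence $\mathrm{Hom}(B^{\mathrm{op}}, \mathrm{Gpd}) \simeq \mathrm{GFib}B$, the fibration $L(j)$ corresponds to the pseudofunctor $b \mapsto \pi_1(b/j)$, which by hypothesis is pointwise equivalent to the terminal pseudofunctor $\mathrm{const}_{\mathbf{1}}$. Since any pseudonatural transformation into a terminal pseudofunctor is pointwise the unique map to $\mathbf{1}$, pointwise equivalence upgrades automatically to a pseudonatural equivalence. Next, for any groupoid $G$, the projection $p_G : B \times G \to B$ is a groupoid fibration (its pseudofibres are all copies of $G$), so the biadjunction together with a direct unwinding of triangles with codomain $p_G$ yields
$$[A, G] \simeq \mathrm{Cat}/B(j, p_G) \simeq \mathrm{GFib}B(L(j), p_G) \simeq \mathrm{GFib}B(1_B, p_G) \simeq [B, G];$$
tracking the biadjunction unit at $j$ through this chain (it is $j$ itself, since $1_B$ is terminal in $\mathrm{Cat}/B$ and so any morphism $j \to L(j) \simeq 1_B$ over $B$ is canonical) shows the composite equivalence is precomposition with $j$.

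The universal property $[C, G] \cong [\pi_1 C, G]$ for a groupoid $G$, recalled at the end of Section~2, now rewrites the above as $(\pi_1 j)^* : [\pi_1 B, G] \to [\pi_1 A, G]$ being an equivalence of categories for every groupoid $G$. Taking $G = \pi_1 A$ and then $G = \pi_1 B$ (equivalently, invoking the Yoneda lemma inside $\mathrm{Gpd}$) produces a quasi-inverse to $\pi_1 j$, so $\pi_1 j$ is an equivalence. The one point requiring care is the identification of the biadjunction unit at $j$ with $j$ itself, but since $1_B$ is terminal, this is forced; the rest is bookkeeping through universal properties already in the paper.
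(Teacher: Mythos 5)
Your argument is correct, but it takes a genuinely different route from the paper. The paper's proof is a direct construction: for an arbitrary category $X$ it builds an explicit quasi-inverse $\theta$ to $[j,1]_{\cong}\colon [B,X]_{\cong}\to [A,X]_{\cong}$ out of a pseudonatural family of adjoint equivalences $\gamma_b\dashv\delta_b\colon X\simeq [b/j,X]_{\cong}$ (existing by ultimateness), verifies the two triangle isomorphisms by hand, and then invokes $[\pi_1C,X]\cong [C,X]_{\cong}$. You instead route everything through Remark~\ref{overB} and the Grothendieck biequivalence $\mathrm{Hom}(B^{\mathrm{op}},\mathrm{Gpd})\simeq \mathrm{GFib}B$: ultimateness gives $L(j)\simeq 1_B$ pointwise, hence as objects of $\mathrm{GFib}B$, and testing the biadjunction against the projection fibrations $B\times G\to B$ yields the equivalence $[B,G]\to [A,G]$, identified with $(-)\circ j$ via the biterminality of $1_B$ in $\mathrm{Cat}/B$. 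The endgame (the universal property of $\pi_1$ plus Yoneda with $G=\pi_1A$, $G=\pi_1B$) is the same in both proofs, and your restriction to groupoid test objects is harmless since $[A,G]_{\cong}=[A,G]$ and the two test objects needed are groupoids. What your approach buys is brevity and conceptual clarity --- the step ``$L(j)\simeq 1_B$ for ultimate $j$'' is exactly what the paper later extracts from Remark~\ref{overB} in the proof of Theorem~\ref{main}, so you are in effect front-loading that observation; the cost is that you lean on Remark~\ref{overB} (which the paper states without proof) and on the standard but unproved fact that a pseudonatural transformation with equivalence components is an equivalence in $\mathrm{Hom}(B^{\mathrm{op}},\mathrm{Gpd})$. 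The paper's proof is more self-contained and establishes the stronger statement that $[B,X]_{\cong}\to[A,X]_{\cong}$ is an equivalence for every category $X$, with the equivalence data made explicit.
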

\begin{proof}
Let $j:A\to B$ be ultimate. We must prove $\pi_1A\xra{\pi_1j}\pi_1B$ is an equivalence. 
What we prove is that, for any category $X$, 
if each diagonal functor $X\xra{\delta_{b}}[b/j,X]_{\cong}$ is an equivalence  then $[B,X]_{\cong}\xra{[j,1]_{\cong}}[A,X]_{\cong}$ is an equivalence.
Since $\delta_{b}$ is 2-natural in $b\in B$, any choice $\gamma_{b}$ of adjoint equivalence is pseudonatural: 
choose also counit $\varepsilon_b : \gamma_b\delta_b \xRa{\cong}1_X$ and unit $\eta_b : 1_A \xRa{\cong}\delta_b\gamma_b$. 
We will show that we have an inverse equivalence $\theta$ for $[j,1]_{\cong}$ defined by 
\begin{equation*}
\xymatrix{
\theta(f)b  \ar[d]_-{\theta(f)\beta} & = & \gamma_b(b/j\xra{\mathrm{cod}}A\xra{f}X \ar[d]^-{\gamma_{\beta, f\mathrm{cod}}}) \\
\theta(f)b'  & = & \gamma_b(b'/j\xra{\beta/j} b/j\xra{\mathrm{cod}}A\xra{f}X) \ .
}
\end{equation*}
For $g\in [B,X]_{\cong}$, we have isomorphisms
\begin{eqnarray*}
(\theta [j,1]_{\cong} g)b & = & \gamma_b(b/j\xra{\mathrm{cod}}A\xra{j}B\xra{g}X) \\
& \cong & \gamma_b(b/j\xra{!}1\xra{b}B\xra{g}X) \\
& \cong & \gamma_b\delta_b (gb) \\
& \xRa{\varepsilon_b \ \cong} & gb 
\end{eqnarray*}
naturally in $g$ and $b$, while, for $f\in [A,X]_{\cong}$, we have isomorphisms
\begin{eqnarray*}
([j,1]_{\cong} \theta) (f)a & = & \theta(f) ja \\
& \cong & \gamma_{ja}(ja/j\xra{\mathrm{cod}}A\xra{f}X) \\
&  \xRa{\eta^{-1} \ \cong} & (f\mathrm{cod})(ja\xra{1_{ja}}ja, a) \\
& = & f a 
\end{eqnarray*}  
naturally in $f$ and $a$. 
\end{proof}

\begin{proposition}\label{pspbultimate}
A functor is ultimate if and only if its pseudopullback along any (groupoid) opfibration is taken by $\pi_1$ to an equivalence. 
\end{proposition}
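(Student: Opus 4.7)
\emph{Proof plan.} We treat the two implications separately; the parenthetical ``(groupoid)'' may be included or omitted, since the backward direction only needs a discrete opfibration and the forward direction works for arbitrary opfibrations.

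\emph{($\Leftarrow$).} For each $b \in B$, apply the hypothesis to the codomain functor $\mathrm{cod} : b/B \to B$, which is a discrete (in particular groupoid) opfibration. Direct inspection of the iso-comma construction identifies the pseudopullback of $j$ along $\mathrm{cod}$ with $b/j$, and the projection onto $b/B$ with the canonical functor induced by $j$. The object $(b,1_b)$ is initial in $b/B$, so the functor $b/B \to \mathbf{1}$ has a left adjoint; since $\pi_1$ preserves adjunctions and any adjunction between groupoids is an equivalence, $\pi_1(b/B) \simeq \mathbf{1}$. The hypothesis therefore forces $\pi_1(b/j) \simeq \mathbf{1}$, so $j$ is ultimate.

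\emph{($\Rightarrow$).} Let $q : C \to B$ be an opfibration and form the pseudopullback with projections $s : P \to A$, $t : P \to C$ and coherence isomorphism $\phi : js \cong qt$. By Proposition~\ref{pi1ufequiv} it is enough to show $t$ is ultimate. Fix $c \in C$. Define $F : c/t \to qc/j$ on objects by $((a, c', \phi), \gamma : c \to c') \mapsto (a, \phi^{-1}\circ q\gamma)$ and on morphisms by projection onto the $A$-component. Using the opfibration $q$, for each $\beta : qc \to ja$ pick an opcartesian lift $\bar\beta : c \to c_\beta$ (with an invertible comparison $qc_\beta \cong ja$ in the groupoid-opfibration case), and put $G(a,\beta) := ((a, c_\beta, 1), \bar\beta)$. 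Then $FG = 1$ directly. For each object $((a,c',\phi),\gamma) \in c/t$ the universal property of $\bar\beta$ at $\beta = \phi^{-1}q\gamma$ supplies a unique $\tau$ with $q\tau = \phi$ and $\tau\bar\beta = \gamma$, and the pairs $(1_a, \tau)$ assemble into a natural counit $\epsilon : GF \Rightarrow 1_{c/t}$. The uniqueness clause delivers the triangle identities, so $G \dashv F$. Applying $\pi_1$ yields an adjunction of groupoids, hence an equivalence; combined with $\pi_1(qc/j) \simeq \mathbf{1}$ this gives $\pi_1(c/t) \simeq \mathbf{1}$, as desired.

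\emph{Main obstacle.} The core technical step is the naturality and triangle identities of $\epsilon$: this is the unique place in the argument where the opcartesian property of $q$ is substantively used, and the adaptation from strict to groupoid opfibrations substitutes coherence isomorphisms for strict equalities on the base---routine but notationally delicate. Everything else (identifying $b/j$ as a pseudopullback, the contractibility $\pi_1(b/B)\simeq\mathbf{1}$, and the passage from an adjunction to a $\pi_1$-equivalence) is formal.
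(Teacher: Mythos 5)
Your proof is correct and follows essentially the same route as the paper: the backward direction pulls $j$ back along $\mathrm{cod}\colon b/B \to B$ exactly as in the text, and the forward direction reduces to showing the pseudopullback projection $t$ is ultimate by comparing $c/t$ with $qc/j$ and invoking Proposition~\ref{pi1ufequiv}. The only difference is that where the paper simply asserts $x/\bar{j}\simeq qx/j$, you construct an explicit adjunction $G \dashv F$ between these comma categories --- a worthwhile precaution, since for a general (non-groupoid) opfibration the comparison functor is not an equivalence but only a right adjoint, hence only a $\pi_1$-equivalence, which is all the argument needs.
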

\begin{proof}
The pseudopullback $P\xra{\bar{j}} X$ of $A\xra{j} B$ along an opfibration $F\xra{q}B$ has $x/\bar{j}\simeq qx/j$; so $\bar{j}$ is ultimate if $j$ is. So $\pi_1$ takes $\bar{j}$ to an equivalence by Proposition~\ref{pi1ufequiv}. For the rest, in the pseudopullback
\begin{equation*}
\xymatrix{
b/j \ar[rr]^-{} \ar[d]_-{\mathrm{cod}} && b/B \ar[d]^-{\mathrm{cod}} \\
A \ar[rr]^-{j} && B \ ,}
\end{equation*}
note that $b/B$ has an initial object and $\mathrm{cod}$ is a groupoid opfibration.   
\end{proof}

\begin{proposition}
Every coinverter (localization) is ultimate.
\end{proposition}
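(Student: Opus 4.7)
My plan is to verify the criterion provided by Proposition~\ref{pspbultimate}: a functor is ultimate precisely when each pseudopullback of it along a groupoid opfibration is sent by $\pi_1$ to an equivalence. Fix a coinverter $j : B \to C$, which by definition universally inverts some 2-cell $\alpha : f \Rightarrow g : A \to B$, and let $q : F \to C$ be any groupoid opfibration. Form the pseudopullback
\begin{equation*}
\xymatrix{
P \ar[rr]^-{\bar{j}} \ar[d] && F \ar[d]^-{q} \\
B \ar[rr]_-{j} && C
}
\end{equation*}
and pull $\alpha$ back along the left projection to obtain a 2-cell $\bar\alpha$ between parallel functors into $P$, which $\bar j$ inverts.

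The first and central step is a stability result: I would show that $\bar j$ is itself the coinverter of $\bar\alpha$. Given a test functor $h : P \to X$ inverting $\bar\alpha$, the opcartesian lifts furnished by $q$ supply transport data which let me reorganize $h$ as a functor out of $F$ together with coherent data along $B$ that inverts $\alpha$; then the universal property of $j$ as coinverter of $\alpha$ produces the required factorization through $\bar j$, uniquely up to the appropriate 2-cell. This descent-across-the-pseudopullback is what needs the opfibration hypothesis to be doing real work.

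Granting stability, the rest is painless. Because $\pi_1$ is left 2-adjoint to the inclusion $\mathrm{Gpd} \hookrightarrow \mathrm{Cat}$, it preserves all 2-colimits and in particular coinverters, so $\pi_1 \bar j$ is a coinverter in $\mathrm{Gpd}$. But every 2-cell between morphisms of groupoids is componentwise invertible already, so every coinverter in $\mathrm{Gpd}$ is an equivalence. This delivers exactly what Proposition~\ref{pspbultimate} demands, finishing the proof.

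The evident obstacle is the stability step: pseudopullback along a groupoid opfibration preserves coinverters. Morally this is a familiar 2-categorical fact --- colimits are stable under pullback along 1-cells with enough lifting --- but making it precise requires careful bookkeeping with the pseudopullback's weak universal property and with the opcartesian transport used to descend a functor out of $P$. I would expect the bulk of a full write-up to be spent here, with steps three and four then fitting into a sentence or two.
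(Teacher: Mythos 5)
Your argument follows the paper's route exactly: reduce to the criterion of Proposition~\ref{pspbultimate}, show that pseudopullback along a groupoid opfibration sends a coinverter to a coinverter, and finish by noting that $\pi_1$ is a left 2-adjoint (hence preserves coinverters) and that coinverters in $\mathrm{Gpd}$ are equivalences because all 2-cells there are already invertible. The one place you diverge is the step you rightly flag as carrying all the weight. You propose to prove stability of coinverters under pseudopullback by hand, descending a functor out of $P$ to data over $F$ via opcartesian transport; the paper instead observes that an opfibration is exponentiable, so pullback along it has a right adjoint and therefore preserves all colimits, coinverters in particular. That one observation collapses what you predict will be ``the bulk of a full write-up'' to a single sentence. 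Your direct approach would succeed, but you would also need a little more care in forming $\bar\alpha$: since $f$ and $g$ are distinct functors $A\to B$, its domain should be the pseudopullback of $A$ over $C$ along $q$ (the invertible $j\alpha$ identifying the two possible choices), rather than literally a pullback along the left projection of $P$. Neither point is a gap in the mathematics, only in the economy of the argument.
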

\begin{proof}
Pullback along an opfibration has a right adjoint so coinverters are taken to coinverters. Also, $\pi_1$ takes coinverters to isomorphisms since it is a left adjoint and all 2-cells in $\mathrm{Gpd}$ are already invertible. Proposition~\ref{pspbultimate} applies.
\end{proof}

\begin{proposition}\label{3for2}
Suppose $A\xra{j} B$ is ultimate. A functor $B\xra{k} C$ is ultimate if and only if the composite $A\xra{j}B\xra{k}C$ is ultimate.
\end{proposition}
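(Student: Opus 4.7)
The plan is to reduce everything to Proposition~\ref{pspbultimate}, which characterizes ultimateness via the action of $\pi_1$ on pseudopullbacks along (groupoid) opfibrations.

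Fix an arbitrary opfibration $q : F \to C$ and form the pseudopullback of $k$ along $q$:
$$\xymatrix{
P \ar[r]^-{\bar k} \ar[d]_-{\bar q} & F \ar[d]^-{q} \\
B \ar[r]_-{k} & C\ .
}$$
Dually to Proposition~\ref{3feo}(c), opfibrations are closed under pseudopullback, so $\bar q : P \to B$ is itself an opfibration. Next, form the pseudopullback of $j$ along $\bar q$:
$$\xymatrix{
P' \ar[r]^-{\bar j} \ar[d]_-{\bar{\bar q}} & P \ar[d]^-{\bar q} \\
A \ar[r]_-{j} & B\ .
}$$
The pasting lemma for pseudopullbacks identifies the outer rectangle with the pseudopullback of the composite $kj$ along $q$, whose upper arrow is $\bar k \bar j : P' \to F$.

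Now apply $\pi_1$. Because $j$ is ultimate and $\bar q$ is an opfibration, Proposition~\ref{pspbultimate} gives that $\pi_1(\bar j)$ is an equivalence of groupoids. Since $\pi_1$ is a 2-functor, $\pi_1(\bar k \bar j) \simeq \pi_1(\bar k)\cdot \pi_1(\bar j)$, and the two-out-of-three property for equivalences yields that $\pi_1(\bar k)$ is an equivalence if and only if $\pi_1(\bar k \bar j)$ is. Since the opfibration $q$ was arbitrary, applying Proposition~\ref{pspbultimate} once more (to $k$ and to $kj$, respectively) delivers the claimed equivalence: $k$ is ultimate if and only if $kj$ is.

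The only steps that are not completely formal are the pasting of pseudopullback squares and the closure of opfibrations under pseudopullback; both are standard bicategorical facts and neither requires computation with explicit 2-cells, so I expect no real obstacle beyond citing them.
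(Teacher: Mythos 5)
Your proposal is correct and follows essentially the same route as the paper: paste the pseudopullback of $k$ along an (op)fibration with the pseudopullback of $j$ along the resulting (op)fibration, apply $\pi_1$, use Proposition~\ref{pspbultimate} on the $j$-leg, and finish by two-out-of-three for equivalences. The only difference is that you spell out the closure of opfibrations under pseudopullback and the pasting lemma, which the paper leaves implicit.
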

\begin{proof}
Look at the pasting
$$
\xymatrix{
Q \ar[r]^-{j'} \ar[d]_-{q''} & P \ar[r]^-{k'}  \ar[d]^-{q'}  & F \ar[d]^-{q} \\
A \ar[r]_-{j} & B \ar[r]_-{k} & C }
$$
of two pullbacks with $q$ a groupoid fibration. Since $j$ is ultimate, $j'$ is equivalenced by $\pi_1$.
So $k'j'$ is equivalenced by $\pi_1$ if and only if $k'$ is. 
\end{proof}

\begin{lemma}\label{fibreaspi1}
If $E\xra{p}B$ is a groupoid fibration and $X\xra{b}B$ is a functor from a groupoid $X$ then the composite $E_b\to b/p\to \pi_1(b/p)$ is an equivalence.
\end{lemma}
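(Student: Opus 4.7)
The plan is to realize $E_b\hookrightarrow b/p$ as a coreflection and then apply $\pi_1$. Write $\iota$ for the inclusion (recall that $E_b$ is, by definition, the full subcategory of $b/p$ on those objects at which the comma 2-cell is invertible, cf.\ \eqref{pspbsq}).

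Given any object $(x,e,\beta)$ of $b/p$, the groupoid-fibration axiom~\ref{gpdfib}(i) supplies a factorization $\beta = p\chi\circ\alpha$ with $\chi\colon e'\to e$ in $E$ and $\alpha\colon bx\cong pe'$ invertible; set $r(x,e,\beta):=(x,e',\alpha)\in E_b$. On a morphism $(f,g)\colon(x_1,e_1,\beta_1)\to(x_2,e_2,\beta_2)$, cartesianness of $\chi_2$ applied at $k=e'_1$ in \eqref{cart} produces a unique $g'\colon e'_1\to e'_2$ satisfying $\chi_2 g'=g\chi_1$ and $pg'=\alpha_2\cdot bf\cdot\alpha_1^{-1}$; define $r(f,g):=(f,g')$. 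Uniqueness of the cartesian factorization makes $r$ a functor and makes the morphisms $(1_x,\chi)\colon\iota r(x,e,\beta)\to(x,e,\beta)$ natural in $(x,e,\beta)$. A second, entirely analogous, application of the cartesian universal property---now used to factor an arbitrary morphism $\iota(x_1,\hat e,\hat\alpha)\to(x,e,\beta)$ of $b/p$ uniquely through $(1_x,\chi)$---shows that this natural transformation is the counit of an adjunction $\iota\dashv r$. Taking the trivial lift when $\beta$ is already invertible gives $r\iota\cong 1_{E_b}$.

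To conclude, observe that $E_b$ is itself a groupoid: by Proposition~\ref{3feo}(c) the pseudopullback $E_b\to X$ is a groupoid fibration, hence conservative (as noted after Definition~\ref{gpdfib}), and $X$ is a groupoid, so every morphism of $E_b$ is invertible. Applying the 2-functor $\pi_1$ to $\iota\dashv r$ gives an adjunction $\pi_1\iota\dashv\pi_1 r$ in $\mathrm{Gpd}$; since every 2-cell in $\mathrm{Gpd}$ is invertible, the unit and counit of this adjunction are automatically invertible, so $\pi_1(\iota)\colon\pi_1(E_b)\to\pi_1(b/p)$ is an equivalence. Because $E_b$ is a groupoid, the unit $\eta_{E_b}\colon E_b\to\pi_1(E_b)$ is an isomorphism; by naturality of $\eta$, the composite $E_b\to b/p\to\pi_1(b/p)$ equals $\pi_1(\iota)\circ\eta_{E_b}$ and is therefore an equivalence.

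The only non-routine step is the construction of the coreflection $\iota\dashv r$, and I do not expect any genuine obstacle there: both the functoriality of $r$ and the universal property of the counit $(1_x,\chi)$ reduce to the single pullback diagram \eqref{cart} that defines cartesianness.
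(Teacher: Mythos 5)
Your proposal is correct and follows essentially the same route as the paper, whose entire proof is the observation that $E_b\to b/p$ is a left adjoint and that $E_b$ is already a groupoid; you have simply made explicit the coreflection $\iota\dashv r$ (via axioms (i) and (ii) of Definition~\ref{gpdfib}) and the final $\pi_1$ argument that the paper leaves implicit.
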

\begin{proof}
$E_b\to b/p$ is a left adjoint and $E_b$ is a groupoid already.
\end{proof}
\begin{proposition}\label{intersection}
Ultimate groupoid fibrations $E\xra{p} B$ are equivalences.
\end{proposition}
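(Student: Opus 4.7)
The plan is to reduce to Lemma~\ref{upsgfib}: since $p$ is a groupoid fibration by hypothesis, it suffices to show that $\upsilon p : \upsilon E \to \upsilon B$ is an equivalence of groupoids. The crux is that the two hypotheses on $p$ combine to make every pseudofibre trivial, after which verifying that $\upsilon p$ is an equivalence is straightforward.

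For the triviality of pseudofibres, take any $b \in B$ and view it as a functor $\mathbf{1} \xra{b} B$ from the terminal groupoid. Lemma~\ref{fibreaspi1} then supplies an equivalence $E_b \simeq \pi_1(b/p)$, and since $p$ is ultimate, $\pi_1(b/p) \simeq \mathbf{1}$. Hence $E_b \simeq \mathbf{1}$ for every $b \in B$.

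Given this, I would check the three conditions for $\upsilon p$ to be an equivalence. Essential surjectivity is immediate since $E_b$ is non-empty. For fullness, given an iso $\beta : pe \cong pe'$ in $B$, the lifting property of Definition~\ref{gpdfib}(i) produces a morphism $\chi : \tilde e \to e'$ together with an iso $\sigma : pe \cong p\tilde e$ satisfying $p\chi \circ \sigma = \beta$; the triviality of $E_{pe}$ then provides a (necessarily invertible) morphism $\phi : e \to \tilde e$ lying over $\sigma$, and $\chi\phi$ is the desired lift of $\beta$, invertible because $p$ is conservative. For faithfulness, if $\phi, \phi' : e \to e'$ are isos with $p\phi = p\phi'$, then $\phi'^{-1}\phi$ is an automorphism of $e$ lying over $1_{pe}$, hence represents an automorphism in $E_{pe} \simeq \mathbf{1}$ and must equal $1_e$.

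There is no serious obstacle; the proof is an assembly of earlier results once one notices that ultimacy together with the groupoid fibration property force the pseudofibres to be trivial. A more conceptual variant would invoke the Grothendieck construction biequivalence directly: $p$ corresponds to a pseudofunctor $T : B^{\mathrm{op}} \to \mathrm{Gpd}$ with each $Tb \simeq \mathbf{1}$, so $T$ is pseudonaturally equivalent to the constant pseudofunctor at $\mathbf{1}$, whose Grothendieck construction is $1_B$; but this route duplicates what Lemma~\ref{upsgfib} is designed to supply.
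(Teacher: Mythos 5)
Your proof is correct, and it shares the paper's skeleton (reduce via Lemma~\ref{upsgfib} to showing $\upsilon p$ is an equivalence, using Lemma~\ref{fibreaspi1} and ultimacy to get there) but implements the middle step differently. The paper argues globally: it takes $X=\upsilon B$ in Lemma~\ref{fibreaspi1}, identifies $\upsilon(p)$ up to equivalence with $\pi_1$ of the pullback of $p$ along the opfibration $b/B\xra{\mathrm{cod}}B$, and invokes Proposition~\ref{pspbultimate} to see that this $\pi_1$-image is an equivalence --- so $\upsilon p$ is an equivalence in one stroke. You instead apply Lemma~\ref{fibreaspi1} pointwise with $X=\mathbf{1}$ to conclude $E_b\simeq\pi_1(b/p)\simeq\mathbf{1}$ for each object $b$ (here ultimacy enters directly through its definition, with no need for Proposition~\ref{pspbultimate}), and then verify by hand that trivial pseudofibres plus the lifting and conservativity properties of a groupoid fibration force $\upsilon p$ to be essentially surjective, full and faithful. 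Your fullness argument is the only place requiring care --- composing the lift $\chi$ from Definition~\ref{gpdfib}(i) with the unique connecting isomorphism in the contractible fibre $E_{pe}$ --- and it is carried out correctly. What the paper's route buys is brevity and the avoidance of an elementwise check; what yours buys is independence from Proposition~\ref{pspbultimate} and a concrete picture of why the conclusion holds (trivial fibres plus lifting of isomorphisms). Both are valid.
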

\begin{proof}
Let $\upsilon B\xra{b} B$ be the inclusion. Since $p$ is ultimate, the pullback $b/p\xra{g}b/B$ of $p$ along $b/B\xra{\mathrm{cod}}B$ is equivalenced by $\pi_1$. 
By Lemma~\ref{fibreaspi1},  $\pi_1(g)$ is equivalent
to $\upsilon (p)$. 
By Lemma~\ref{upsgfib}, since $p$ is a groupoid fibration with $\upsilon (p)$ an equivalence, $p$ is an equivalence. 
\end{proof}

\section{Bicategorical factorization systems}

The concept of factorization system in a bicategory is not new; for example, see \cite{61, DV}.
Before giving the definition, we revise the bicategorical variant of pullback.
\begin{definition}\label{defbipb} A square
\begin{equation}\label{bipbsq}
\begin{aligned}
\xymatrix{
W \ar[d]_{p}^(0.5){\phantom{aaaa}}="1" \ar[rr]^{q}  && B \ar[d]^{g}_(0.5){\phantom{aaaa}}="2" \ar@{=>}"1";"2"^-{\sigma}_{\cong}
\\
A \ar[rr]_-{f} && C 
}
\end{aligned}
\end{equation}
in a bicategory $\CK$ is called a {\em bipullback} of the cospan $A\xra{f}C\xla{g}B$ when, for all objects $K\in \CK$,
the functor $$\CK(K,W) \xra{(p,\sigma,q)}\CK(K,f)/_{\mathrm{ps}}\CK(K,g) \ ,$$ obtained from the universal property of the pseudopullback, is an equivalence. 
\end{definition}

\begin{remark}\label{bipbgpdfib} In the square \eqref{bipbsq}, if $g$ and $p$ are groupoid fibrations, then the square is a bipullback if and only if 
$$\upsilon(\CK(K,W)) \xra{\upsilon(p,\sigma,q)}\upsilon(\CK(K,f)/_{\mathrm{ps}}\CK(K,g))$$ 
is an equivalence of groupoids. This is because Proposition~\ref{3feo} (b) and (c) imply $(p,\sigma,q)$
is a groupoid fibration so that Lemma~\ref{upsgfib} applies.  
\end{remark}

A {\em factorization system on a bicategory} $\CK$ consists of a pair $(\CE,\CM)$ of sets $\CE$ and $\CM$ of morphisms of $\CK$ satisfying:
\begin{itemize}
\item[FS0.] if $f\cong mw$ with $m\in \CM$ and $w$ an equivalence then $f\in \CM$, while
if $f\cong we$ with $e\in \CE$ and $w$ an equivalence then $f\in \CE$;
\item[FS1.] for all $X\xra{e}Y\in \CE$ and $A\xra{m}B\in \CM$, the diagram 
\begin{equation}\label{psorthog}
\begin{aligned}
\xymatrix{
\CK(Y,A) \ar[d]_-{\CK(Y,m)}^(0.5){\phantom{aaa}}="1" \ar[rr]^-{\CK(e,A)}  &&  \CK(X,A) \ar[d]^-{\CK(X,m)}_(0.5){\phantom{aaa}}="2" \ar@{}"1";"2"^-{\cong}
\\
\CK(Y,B) \ar[rr]^-{\CK(e,B)} && \CK(X,B)}
\end{aligned}
\end{equation}
(in which the isomorphism has components of the associativity constraints for $\CK$) is a bipullback;
\item[FS2.] every morphism $f$ factorizes $f\cong m\circ e$ with $e\in \CE$ and $m\in \CM$. 
\end{itemize}

It follows that $\CE$ and $\CM$ are closed under composition and their intersection consists of precisely the equivalences. Moreover, in the square \eqref{psorthog}, the morphism $m$ is in $\CM$ if the square is a
bipullback for all $e\in \CE$, and dually. Also note that, if all morphisms in $\CM$ are groupoid fibrations
then Remark~\ref{bipbgpdfib} applies to simplify the bipullback verification for FS1.

\section{Main theorem}

\begin{theorem}\label{main}
Ultimate functors and groupoid fibrations form a bicategorical factorization system on $\mathrm{Cat}$. 
So every functor $f : A \to B$ factors pseudofunctorially 
as $f \cong (A\xra{j} E \xra{p} B)$ with $j$ ultimate and $p$ a groupoid fibration.
\end{theorem}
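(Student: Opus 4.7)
My plan is to mirror the proof of the usual comprehensive factorization in \cite{104}, replacing $\pi_0$ by $\pi_1$ and discrete fibrations by groupoid fibrations throughout. For FS2, given $f : A \to B$, I apply Remark \ref{overB}: set $p : E \to B$ to be the groupoid fibration $\pi_{1 B}(B/f \xra{\mathrm{dom}} B)$, which by the Grothendieck biequivalence corresponds to the pseudofunctor $b \mapsto \pi_1(b/f)$. The unit of the biadjunction supplies $j : A \to E$ with $pj \cong f$. The first real step is to verify $j$ is ultimate. For $e \in E$ lying over $b$ and corresponding to $\tilde{x} \in \pi_1(b/f)$, I would unwind the Grothendieck construction and identify $e/j$ with the category whose objects are triples consisting of $a \in A$, a morphism $\beta : b \to fa$, and an isomorphism $\xi : \tilde{x} \to [\beta]$ in $\pi_1(b/f)$, with morphisms those $\alpha : a \to a'$ compatible with $\beta$ and $\xi$. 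A direct calculation then gives $\pi_1(e/j) \simeq \mathbf{1}$: the condition $\tilde{x} \cong [\beta]$ restricts to the connected component of $\tilde{x}$ (handling $\pi_0$-triviality), and the rigidifying data of $\xi$ collapses the remaining automorphisms.

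FS0 is routine. Equivalences are right adjoints, hence ultimate by Proposition \ref{erafiu}, and are groupoid fibrations by the remark following Definition \ref{gpdfib}. Closure of both classes under composition with equivalences then follows immediately --- for ultimate functors from 2-functoriality of $\pi_1$ and the comma construction, and for groupoid fibrations from the pseudopullback characterization in Proposition \ref{3feo}(a).

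FS1 will be the main obstacle. Given an ultimate $j : A \to B$ and a groupoid fibration $q : Y \to C$, I must show the square \eqref{psorthog} in $\mathrm{Cat}$ is a bipullback. Since $[K, -]$ preserves pseudopullbacks, $[K, q]$ is again a groupoid fibration (Proposition \ref{3feo}(a)), so Remark \ref{bipbgpdfib} reduces the bipullback test to an equivalence of groupoids after applying $\upsilon$. I would then verify this equivalence directly: given compatible data $(h : A \to Y,\ g : B \to C,\ qh \cong gj)$, construct the lift $B \to Y$ pointwise using ultimateness of $j$ (in the form $\pi_1(b/j) \simeq \mathbf{1}$) to produce a contractible groupoid of choices at each $b \in B$, and assemble these via the cartesian lifting of $q$. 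The two-dimensional aspects of full faithfulness are handled analogously on arrow categories. A cleaner alternative is to apply FS2 to the comparison cospan itself and invoke Proposition \ref{intersection} --- an ultimate groupoid fibration is an equivalence --- thus shifting the difficulty onto carefully nesting the factorization inside the comparison diagram while avoiding explicit gluing.
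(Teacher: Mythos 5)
There are two genuine gaps, one in your FS2 argument and one in FS1. For FS2, you propose to establish that the unit $j : A \to E$ is ultimate by unwinding the Grothendieck construction and computing $\pi_1(e/j) \simeq \mathbf{1}$ ``directly,'' with the justification that the component of $\tilde{x}$ handles $\pi_0$ and ``the rigidifying data of $\xi$ collapses the remaining automorphisms.'' This reasoning is not sound: triviality of the fundamental groupoid of a category is \emph{not} implied by connectedness together with the absence of nontrivial automorphisms. The category with two objects and two parallel arrows between them is connected and has only identity endomorphisms, yet its fundamental groupoid is $\mathbb{Z}$; what must be controlled are the zigzags in the localization, and the paper explicitly warns that the generators-and-relations presentation of $\pi_1$ is awkward and works only through its universal property. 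The paper sidesteps this entirely by factoring $j$ as $A\xra{i}B/f\xra{n}E$ where $i$ has a left adjoint $\mathrm{cod}$ and $n$ is a coinverter, and then invoking Proposition~\ref{erafiu} (right adjoints are ultimate), the proposition that coinverters are ultimate, and the composition closure of Proposition~\ref{3for2}. Your computation, if carried out honestly, would amount to reproving those propositions in a harder special case.

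For FS1, your primary plan (pointwise lifts at each $b\in B$ assembled by cartesian lifting of $q$) leaves unaddressed exactly the part that is difficult: making the contractible groupoid of pointwise choices into a functor $B\to Y$ coherently, and establishing the two-dimensional uniqueness. The mechanism the paper actually uses is Remark~\ref{overB}: since $j$ is ultimate, its reflection into $\mathrm{GFib}B$ is $1_B$, so every morphism over $B$ from $j$ to a groupoid fibration factors through $1_B$ \emph{uniquely up to unique isomorphism}; this single universal property, applied to the pseudopullback $p/_{\mathrm{ps}}v\xra{t'}B$, delivers essential surjectivity, fullness and faithfulness of $\upsilon([j,E],[B,p])$ in one stroke. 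Your ``cleaner alternative'' --- factor the comparison map and invoke Proposition~\ref{intersection} --- does produce a diagonal filler (factor $A\to q/_{\mathrm{ps}}v$ as ultimate followed by groupoid fibration, apply Proposition~\ref{3for2} and Proposition~\ref{intersection} to see the second factor is an equivalence), and that is a legitimate route to the existence half. But FS1 demands a bipullback, i.e.\ also that fillers are unique up to unique coherent isomorphism, and extracting that from essential uniqueness of $(\mathscr{E},\mathscr{M})$-factorizations is circular, since that essential uniqueness is ordinarily a \emph{consequence} of FS1. As it stands, neither of your two routes closes the full-faithfulness half of the bipullback condition.
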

\begin{proof} FS0 is obvious. For FS2 construct the diagram
$$
\xymatrix{
A \ar[r]^-{i} \ar[d]_-{f} & B/f \ar[r]^-{n}  \ar[d]^-{\mathrm{dom}}  & E \ar[d]^-{p} \\
B \ar[r]^-{1} & B \ar[r]^-{1} & B }
$$
where $(E\xra{p} B) = \pi_{1B}(B/f \xra{\mathrm{dom}}B)$, the squares commute up to isomorphism, $i$ has a left adjoint $\mathrm{cod}$, and $n$ is a coinverter. 

It remains to prove FS1. By Remark~\ref{bipbgpdfib}, we must prove that, for any groupoid fibration $E\xra{p} C$ and any ultimate functor $A\xra{j} B$, the functor
$$([j,E], [B,p]) : [B,E] \lra [A,p]/_{\mathrm{ps}}[j,C]$$ 
is taken to an equivalence of groupoids by $\upsilon$.
By Remark~\ref{overB}, the value of the left biadjoint to $\mathrm{GFib}B\hookrightarrow \mathrm{Cat}/B$ at the ultimate functor $A\xra{j} B$ is equivalent to $B\xra{1_B} B$. So every morphism $j\xra{(f,\phi)}q$ over $B$
with $q$ a groupoid fibration factors up to isomorphism as
\begin{equation}\label{reflectultimate}
\begin{aligned}
\xymatrix{
j \ar[rd]_{(f,\phi)}^(0.5){\phantom{a}}="1" \ar[rr]^{(j,1_j)}  && 1_B \ar[ld]^{(w,\psi)}_(0.5){\phantom{a}}="2" \ar@{=>}"1";"2"^-{\sigma}_{\cong}
\\
& q 
}
\end{aligned}
\end{equation}
uniquely up to a unique isomorphism. In this, we have $f\xRa{\sigma}wj$ and $1_B\xRa{\psi}qw$
such that $\psi j = (j\xRa{\phi}qf\xRa{q\sigma}qwj)$.
Take any object $(u,\gamma ,v)$ of $[A,p]/_{\mathrm{ps}}[j,C]$; it consists of functors $A\xra{u}E, B\xra{v}C$
and an invertible natural transformation $pu\xRa{\gamma} vj$. By the universal property of the pseudopullback
$p/_{\mathrm{ps}}v$, the isomorphism $\gamma$ is equal to the pasted composite
$$
\xymatrix{
A \ar@/_/[ddr]_u \ar@/^/[drrr]^j
\ar@{.>}[dr]|-{u'} \\
& p/_{\mathrm{ps}}v \ar[d]_{s'}^(0.5){\phantom{aaaaa}}="1" \ar[rr]^{t'}
&& B \ar[d]^{v}_(0.5){\phantom{aaaaa}}="2" \ar@{=>}"1";"2"^-{\lambda'}_{\cong} \\
& E \ar[rr]_p && C \ . }
$$
By Proposition~\ref{3feo}, $p/_{\mathrm{ps}}v\xra{t'}B$ is a groupoid fibration. 
We can apply \eqref{reflectultimate} with $f = u'$, $q = t'$ and $\phi$ the identity of $j = t'u'$
to obtain $u'\xRa{\sigma}wj$ and $1_B\xRa{\psi}t'w$
such that $\psi j = (j=t'u'\xRa{t'\sigma}t'wj)$ uniquely up to a unique isomorphism of $(w,\psi,\sigma)$. 
This gives us $w'=s'w\in [B,E]$ and an isomorphism $(u,\gamma, v) \cong (w'j,1_{pw'j},pw') = ([j,E], [B,p])w'$
determined by the isomorphisms 
$$u=s'u'\xRa{s'\sigma}s'wj = w'j \ \text{ and } \ v\xRa{v\psi}vt'w = vt'w\xRa{(\lambda'w)^{-1}} ps'w=pw' \ .$$
This proves that the functor $\upsilon([j,E], [B,p])$ is surjective on objects up to isomorphism.
Now suppose we also have $h\in [B,E]$ and an isomorphism 
$$(\xi,\zeta) : ([j,E], [B,p])h\cong (w'j,1_{pw'j},pw')$$
which means we have invertible $hj\xRa{\xi}s'wj$ and $ph\xRa{\zeta}ps'w$ such that $p\xi = \zeta j$.
By the universal property of the pseudopullback, there exists a unique $k : B\to p/_{\mathrm{ps}v}$ such that
$s'k = h$, $t'k = 1_B$ and $\lambda'k = (ph\xRa{\zeta} ps'w\xRa{\lambda'w}vt'w\xRa{v\psi^{-1}}v)$, and there also exists a unique invertible
$\tau : kj\Ra wj$ such that $\xi = (hj=s'kj\xRa{s'\tau}s'wj)$ and $\psi j = (t'kj\xRa{t'\tau}t'wj)$.
So we have
\begin{equation*}
\begin{aligned}
\xymatrix{
j \ar[rd]_{(u',1_j)}^(0.5){\phantom{a}}="1" \ar[rr]^{(j,1_j)}  && 1_B \ar[ld]^{(k,1_{1_B})}_(0.5){\phantom{a}}="2" \ar@{=>}"1";"2"^-{\tau^{-1}\sigma}_{\cong}
\\
& t' 
}
\end{aligned}
\end{equation*} 
which allows us to use the uniqueness of $(w,\psi,\sigma)$ to obtain a unique isomorphism
$\kappa : k \Ra w$ such that $\kappa j = \tau$ and $t'\kappa = \psi$.
Then $\kappa' = s'\kappa : h\Ra w'$ is such that $\kappa' j = s'\kappa j = s' \tau = \xi$
and $p\kappa' = p s'\kappa = (\lambda'w)^{-1} (v\psi) (\lambda'k) = \zeta$.
Hence $\upsilon([j,E], [B,p])$ is full and it remains to prove it faithful.
So suppose we have an invertible $\delta' : h\Ra w'$ such that $\delta' j = \xi$
and $p\delta' = \zeta = (\lambda'w)^{-1} (v\psi) (\lambda'k)$. The universal property
of pseudopullback implies there exists $\delta : k\Ra w$ such that $s'\delta = \delta'$
and $t'\delta = \psi$, and implies we can deduce that $\delta j = \tau$ from the equations
$s'\delta j = \xi$ and $t'\delta j = \psi j = t'\tau$. By the uniqueness of $\kappa$, we have
$\delta = \kappa$ and hence $\delta' = \kappa'$, as required.   
\end{proof}

\section{Other possible variants}

It is possible that the factorization carries through for $(\infty,1)$-categories (also called quasicategories
or weak Kan complexes); see \cite{JoyV1, JoyV2}.
For the case of the tricategory 
$(2,1)\text{-}\mathrm{Cat}$ whose objects are bicategories with all 2-cells invertible, 
a basic ingredient would be the triadjunction
\[ \xymatrix @R-3mm {
(2,1)\text{-}\mathrm{Cat} \ar@<1.5ex>[rr]^{\pi_1} \ar@{}[rr]|-{\perp} &&(2,0)\text{-}\mathrm{Cat} \ar@<1.5ex>[ll]^{\mathrm{incl}}
}\]
where $(2,0)\text{-}\mathrm{Cat}$ is the subtricategory of $(2,1)\text{-}\mathrm{Cat}$ with all morphisms equivalences. There is an obvious core providing a right triadjoint too.
This requires the bumping up to factorization systems on tricategories. And, after all, as yet my application only needs the $\mathrm{Cat}$ case.

There is presumably also a version of the (ultimate, groupoid fibration) for categories internal to a category 
$\CE$ as done in \cite{104} for the usual comprehensive factorization.

Another direction concerns the laxer hierarchy of comprehension schema proposed by John Gray; see \cite{JWG99, JWG391}. What kinds of factorization do they provide?

\section{Application to polynomials}

In this section, we use our factorization to understand the implications of the paper \cite{134}
for polynomials in $\mathrm{Cat}$ as a bicategory.   

A morphism $p : E \to B$ in a bicategory is called a {\em groupoid fibration} when, for all
objects $A\in \CM$, the functor $\CM(A,p) : \CM(A,E)\to \CM(A,B)$ is a groupoid fibration as per
Definition~\ref{gpdfib}. 

A morphism $n :Y\to Z$ is called a {\em right lifter} when, for all $u : K\to Z$, there exists a right lifting of
of $u$ through $n$ (in the sense of \cite{12}).

Recall from \cite{134} that a bicategory $\CM$ with bipullbacks is always calibrated by the 
groupoid fibrations as the neat morphisms; that is, such a bicategory is polynomic. This
allows for the construction of a bicategory of ``polynomials'' in $\CM$. Indeed, Definition 8.2
of \cite{134} means for this situation that a {\em polynomial} $(m,S,p)$ from $X$ to $Y$ in $\CM$ is a span
$$X\xla{m}S\xra{p}Y$$ in $\CM$ with $m$ a right lifter and $p$ a groupoid fibration. 
To have a more explicit description we need to identify the right lifters in the given $\CM$.
 
\begin{proposition}
A functor is a right lifter in $\mathrm{Cat}$ if and only if it is a right adjoint.
\end{proposition}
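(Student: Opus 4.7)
The plan is to argue both implications by reducing to the standard correspondence between adjoints and absolute liftings.

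For the easier direction, suppose $n : Y\to Z$ is a right adjoint with left adjoint $k \dashv n$ and unit $\eta : 1_Z \Ra nk$. Given any $u : K\to Z$, I would exhibit $(ku,\, \eta u : u\Ra n(ku))$ as the right lifting of $u$ through $n$. The universal property reduces to transposing across the adjunction $(k\circ -)\dashv (n\circ -)$ induced on the functor categories $\mathrm{Cat}(K,Y)$ and $\mathrm{Cat}(K,Z)$: any competing 2-cell $\alpha : u\Ra n\ell$ transposes uniquely to $\tilde\alpha : ku\Ra \ell$ satisfying $n\tilde\alpha \cdot \eta u = \alpha$.

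For the converse, suppose $n$ is a right lifter. Specialising to $K = \mathbf{1}$, for each $z\in Z$ the right lifting of $z : \mathbf{1}\to Z$ through $n$ produces an object $k(z)\in Y$ together with a universal morphism $\eta_z : z\to nk(z)$, equivalently a hom-set bijection $Y(k(z),y)\cong Z(z,ny)$ natural in $y\in Y$. Using the universal property of each $\eta_{z'}$, one defines $k(f) : k(z)\to k(z')$ for any $f : z\to z'$ in $Z$ as the unique morphism with $nk(f)\circ \eta_z = \eta_{z'}\circ f$; functoriality of $k$ and naturality of the bijection in $z$ then follow from uniqueness, yielding an adjunction $k\dashv n$, so that $n$ is a right adjoint.

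The only real delicacy is conventional: one must align the direction of the universal 2-cell in the definition of right lifting (as used in \cite{12}) with the choice of which side of the adjunction is furnished by the right lifting of the identity, so that ``right lifter'' matches ``right adjoint'' rather than ``left adjoint''. Once this convention is pinned down, both implications unfold directly from the hom-set characterisation of adjunctions, with no further input from the bicategorical structure of $\mathrm{Cat}$ required.
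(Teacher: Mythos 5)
Your argument is correct and follows essentially the same route as the paper's: the forward implication by observing that composition with the adjoint supplies the liftings (transposing across the induced adjunction on hom-categories), and the converse by specialising to $K=\mathbf{1}$ so that the pointwise right lifts become universal arrows from which the adjoint functor is assembled in the standard way. Your explicit flagging of the left/right convention for the universal $2$-cell is a welcome addition, since the paper's own two-line proof glosses over exactly that point.
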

\begin{proof}
Right adjoints in any bicategory are right lifters since the lifting is given by composing with the left adjoint.
Conversely, suppose the functor $Y\xra{n}Z$ is a right lifter. A right lift $1\xra{n_*(z)}Y$ for 
each object $1\xra{z}Z$ of $Z$ gives the components $nn_*(z) \xra{\epsilon_z}z$ of the counit of an adjunction $n_*\dashv n$; as in any book introducing adjoint functors, we know that the universal property of right lifter 
allows us to define $n_*$ on morphisms and so on. 
\end{proof}

In order to distinguish polynomials in the polynomic bicategory $\mathrm{Cat}$ from
polynomials in $\mathrm{Cat}$, in the sense of Weber \cite{Weber2015}, as a category with pullbacks, 
I use the term {\em abstract polynomial} for the former; that is, it is a span
$$A\xla{j_*} E \xra{p} B$$ 
of functors, where $p$ is a groupoid fibration and $j_*\dashv j$. 

A functor $f : A \to B$ is an {\em abstract polynomial functor} when, 
in its factorization $$f \cong (A\xra{j} E \xra{p} B)$$ as per Theorem~\ref{main},
the ultimate functor $j$ is a right adjoint. 

The next result follows from the work in \cite{134}; for convenience, we will include a direct proof. 

\begin{proposition}
Abstract polynomial functors compose. 
\end{proposition}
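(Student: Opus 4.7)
The plan is to invoke Theorem~\ref{main} on $gf$ and show that the resulting ultimate factor is a right adjoint. Fix factorizations $f \cong (A\xra{j_f} E_f \xra{p_f} B)$ and $g \cong (B\xra{j_g} E_g \xra{p_g} C)$ of the two given abstract polynomial functors, with $j_f, j_g$ right adjoints having left adjoints $k_f \dashv j_f$ and $k_g \dashv j_g$, and with $p_f, p_g$ groupoid fibrations. Then $gf \cong p_g j_g p_f j_f$, and the strategy is to refactor the middle composite $j_g p_f : E_f \to E_g$ via Theorem~\ref{main} as $j_g p_f \cong (E_f \xra{j'} D \xra{p'} E_g)$. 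By Propositions~\ref{3feo}(b) and~\ref{3for2}, $p_g p'$ is then a groupoid fibration and $j' j_f$ is ultimate (using Proposition~\ref{erafiu} to see that $j_f$ is ultimate); so by uniqueness of the bicategorical factorization, $gf \cong (p_g p')\circ(j' j_f)$ is the (ultimate, groupoid fibration) factorization of $gf$, and it suffices to prove that $j' j_f$ is a right adjoint.

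Since $j_f$ already is one, the remaining task is to find a left adjoint to $j'$. For this I would first identify $D$ explicitly: by Remark~\ref{overB} the groupoid fibration factor $p'$ is classified by $e_g \mapsto \pi_1(e_g/(j_g p_f))$, and applying the adjunction $k_g \dashv j_g$ together with Lemma~\ref{fibreaspi1} gives
\[
\pi_1\bigl(e_g/(j_g p_f)\bigr) \;\simeq\; \pi_1\bigl(k_g e_g / p_f\bigr) \;\simeq\; (E_f)_{k_g e_g},
\]
so $D$ is equivalent to the pseudopullback of $p_f$ along $k_g$, with canonical projections $p' : D \to E_g$ and $\rho : D \to E_f$. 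Under this identification, $j'$ sends $e_f \in E_f$ to $(j_g p_f e_f,\, \overline{e_f},\, \alpha_{e_f})$, where $\overline{e_f}\to e_f$ is a cartesian lift of the counit $\varepsilon_{p_f e_f}: k_g j_g p_f e_f \to p_f e_f$, chosen using that every morphism of $E_f$ is cartesian for $p_f$.

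With this model in hand, set $K = k_f \rho : D \to A$; the plan is then to verify the adjunction $K \dashv j' j_f$ via a natural bijection
\[
A\bigl(K(e_g, e_f, \alpha),\, a\bigr) \;\cong\; E_f(e_f, j_f a) \;\cong\; D\bigl((e_g, e_f, \alpha),\, j' j_f(a)\bigr).
\]
The first step is just $k_f \dashv j_f$. The second combines the adjunction $k_g \dashv j_g$ (identifying the $E_g$-component $u : e_g \to j_g fa$ as the transpose of the composite $k_g e_g \xra{\alpha} p_f e_f \xra{p_f w} fa$ associated to a morphism $w : e_f \to j_f a$) with the universal property of the cartesian lift $\chi : \overline{j_f a}\to j_f a$ in $p_f$ (producing a unique $v : e_f \to \overline{j_f a}$ with $\chi v = w$ and $p_f v = \alpha_a \circ k_g u \circ \alpha^{-1}$). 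The main obstacle will be the careful bookkeeping in this second bijection: reconciling the source iso $\alpha$, the target iso $\alpha_a$ from the cartesian lift of $\varepsilon_{fa}$, and the counit itself, so that the resulting pair $(u,v)$ satisfies the pseudopullback compatibility $p_f v \circ \alpha = \alpha_a \circ k_g u$ and so that the passage $w \leftrightarrow (u,v)$ is mutually inverse and natural in $a$ and $d$. Once this is in place, $K \dashv j' j_f$, establishing that $gf$ is an abstract polynomial functor.
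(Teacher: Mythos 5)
Your proof is correct and takes essentially the same route as the paper: the paper forms the pseudopullback of $p_f$ along the left adjoint $k_g$ directly and invokes the Chevalley--Beck condition $p'\circ j'\cong j_g\circ p_f$ with $j'$ a right adjoint, which is exactly your middle object $D$ and your adjunction $\rho\dashv j'$ (hence $k_f\rho\dashv j'j_f$). The only difference is presentational---you reach the pseudopullback via Theorem~\ref{main} and the fibre computation of Remark~\ref{overB} and then spell out the adjunction that the paper dismisses with ``one easily verifies''.
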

\begin{proof}
Take $A\xra{j} E \xra{p}B\xra{k} F \xra{q}C$ with $j_*\dashv j$, $k_*\dashv k$ and with $p,q$ groupoid fibrations. Form the pseudopullback
\begin{equation}\label{bipb}
 \begin{aligned}
\xymatrix{
P \ar[d]_{k'_*}^(0.5){\phantom{aaaa}}="1" \ar[rr]^{p'}  && F \ar[d]^{k_*}_(0.5){\phantom{aaaa}}="2" \ar@{<=}"1";"2"^-{\theta}_-{\cong}
\\
E \ar[rr]_-{p} && B 
}
 \end{aligned}
\end{equation}
to obtain the required ``distributive law''. One easily verifies there exists 
$k'_*\dashv k'$, $p'$ is a groupoid fibration and the Chevalley-Beck condition (as recalled on page 150 of \cite{9}) 
$$p'\circ k'\cong k\circ p$$ holds. So $q\circ k\circ p\circ j\cong q\circ p'\circ k'\circ j$ where
$q\circ p'$ is a groupoid fibration and $k'\circ j$ is a right adjoint.     
\end{proof}

Write $\mathrm{Cat}_{\mathrm{apf}}$ for the subcategory of $\mathrm{Cat}$ obtained by restricting the morphisms
to abstract polynomial functors. 

The next result is essentially Proposition 8.6 of \cite{134}.

\begin{proposition} 
If the bicategory $\CM$ is calibrated then, for each $K\in \CM$, there is a pseudofunctor
$\mathbb{H}_K : \mathrm{Poly}\CM \lra \mathrm{Cat}_{\mathrm{apf}}$ taking the polynomial $X\xla{m} S\xra{p} Y$
to the abstract polynomial functor which is the composite 
$$\CM(K,X)\xra{\mathrm{rif}(m,-)} \CM(K,S) \xra{\CM(K,p)} \CM(K,Y) $$
in $\mathrm{Cat}$. 
\end{proposition}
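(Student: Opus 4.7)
The plan is to construct $\mathbb{H}_K$ as a pseudofunctor by specifying its action on objects, 1-cells, and 2-cells, then verifying the coherence axioms. On objects set $\mathbb{H}_K(X)=\CM(K,X)$; on a polynomial 1-cell $(m,S,p):X\to Y$ the image is the composite displayed in the statement. The first substantive point is that this composite is an abstract polynomial functor. Since $m$ is a right lifter, the universal property of right lifts exhibits $\mathrm{rif}(m,-):\CM(K,X)\to\CM(K,S)$ as adjoint to $\CM(K,m)$, so $\mathrm{rif}(m,-)$ is a right adjoint functor and hence ultimate by Proposition~\ref{erafiu}. Since $p$ is a groupoid fibration in $\CM$, the functor $\CM(K,p)$ is a groupoid fibration in $\mathrm{Cat}$ by the definition opening this section. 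Thus $\CM(K,p)\circ\mathrm{rif}(m,-)$ is already presented as a right-adjoint ultimate functor followed by a groupoid fibration; essential uniqueness of the (ultimate, groupoid fibration)-factorization from Theorem~\ref{main}, together with stability of right-adjointness under equivalence of functors, ensures that its canonical ultimate factor is equivalent to $\mathrm{rif}(m,-)$ and therefore also a right adjoint, so the composite lies in $\mathrm{Cat}_{\mathrm{apf}}$.

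The action on 2-cells combines pseudofunctoriality of $\CM(K,-)$ with the 2-functoriality of $\mathrm{rif}$ in its lifter variable (produced from the universal property of right lifts), yielding natural transformations whose vertical-composition coherence is immediate from the corresponding uniqueness clauses. The remaining, substantive verification is preservation of horizontal composition. Following the proof of the preceding proposition, composition of $(m,S,p):X\to Y$ and $(n,T,q):Y\to Z$ in $\mathrm{Poly}\CM$ is built from the bipullback
\[
\xymatrix{
P \ar[r]^{p'} \ar[d]_{m'} & T \ar[d]^{n} \\
S \ar[r]_{p} & Y
}
\]
of $p$ along $n$ (available because $\CM$ is calibrated and $p$ is a groupoid fibration), yielding the composite polynomial $(m\circ m',P,q\circ p')$. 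Using pseudofunctoriality of $\CM(K,-)$ and the iterated universal property $\mathrm{rif}(m\circ m',-)\cong\mathrm{rif}(m',-)\circ\mathrm{rif}(m,-)$, the required identification of $\mathbb{H}_K$ of the composite polynomial with the composite of $\mathbb{H}_K$'s reduces to establishing a Beck--Chevalley isomorphism
\[
\CM(K,p')\circ\mathrm{rif}(m',-)\;\cong\;\mathrm{rif}(n,-)\circ\CM(K,p).
\]

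The main obstacle is this Beck--Chevalley step, and it is where the calibrated (polynomic) structure of $\CM$ genuinely enters. The pseudofunctor $\CM(K,-)$ sends the displayed bipullback to a bipullback in $\mathrm{Cat}$ (of groupoid fibrations, since $p'$ inherits groupoid-fibrationhood from $p$); in such a pseudopullback square, the pair of adjunctions $\CM(K,n)\dashv\mathrm{rif}(n,-)$ and $\CM(K,m')\dashv\mathrm{rif}(m',-)$ satisfies the required Beck--Chevalley identity by the standard mates-under-pullback argument, using that right lifting is a representable right adjoint. The associator and unitor data for $\mathbb{H}_K$ are then forced by the uniqueness clauses in the universal properties of bipullback and of right lifting, so the pseudofunctor coherence axioms reduce to bookkeeping that mirrors the verification in \cite[Proposition 8.6]{134}.
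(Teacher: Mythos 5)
The paper offers no proof of this proposition---it simply records that the result is essentially Proposition 8.6 of \cite{134}---so there is no internal argument to compare against line by line; your reconstruction is sound in outline and identifies the two genuinely substantive points. The well-definedness step is correct and uses exactly the paper's ingredients: $\CM(K,m)\dashv\mathrm{rif}(m,-)$ by the universal property of right lifts, so $\mathrm{rif}(m,-)$ is a right adjoint and hence ultimate by Proposition~\ref{erafiu}; $\CM(K,p)$ is a groupoid fibration by the definition opening the section; and essential uniqueness of the factorization of Theorem~\ref{main} then places the composite in $\mathrm{Cat}_{\mathrm{apf}}$. For preservation of composition, the reduction via $\mathrm{rif}(mm',-)\cong\mathrm{rif}(m',-)\circ\mathrm{rif}(m,-)$ to the single isomorphism $\CM(K,p')\circ\mathrm{rif}(m',-)\cong\mathrm{rif}(n,-)\circ\CM(K,p)$ is the right move, and it is the crux. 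The one soft spot is your justification of that isomorphism by a ``standard mates-under-pullback argument'': the mate of a pseudopullback isomorphism across a pair of adjunctions is \emph{not} invertible for general squares, and its invertibility here depends essentially on $\CM(K,p)$ and $\CM(K,p')$ being groupoid fibrations. This is precisely the Chevalley--Beck condition that the paper itself invokes (citing page 150 of \cite{9}) in the immediately preceding proposition on composition of abstract polynomial functors; anchoring your step to that condition, rather than to a generic mates principle, is what makes the argument airtight. The treatment of 2-cells and of the pseudofunctor coherence data is left as bookkeeping, which matches the level of detail of both this paper and \cite{134}.
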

\begin{corollary} 
The pseudofunctor
$\mathbb{H}_{\mathbf{1}} : \mathrm{Poly}\mathrm{Cat} \lra \mathrm{Cat}_{\mathrm{apf}}$,
taking each abstract polynomial $A\xla{j_*} E \xra{p} B$ its associated abstract polynomial functor
$A\xra{j} E \xra{p} B$ with $j_*\dashv j$, is a biequivalence.
\end{corollary}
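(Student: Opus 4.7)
My plan is to verify that $\mathbb{H}_{\mathbf{1}}$ is a biequivalence by checking the three standard conditions: biessential surjectivity on objects, essential surjectivity on each hom-category, and fully faithfulness on 2-cells. On objects the pseudofunctor is the identity (using the canonical equivalence $\mathrm{Cat}(\mathbf{1},X) \simeq X$), so that step is immediate.

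For essential surjectivity on 1-cells, let $f : A \to B$ be an abstract polynomial functor. By Theorem~\ref{main} it factors as $f \cong p \circ j$ with $j$ ultimate and $p$ a groupoid fibration; by the defining hypothesis on $f$ the ultimate factor $j$ is a right adjoint, so has a left adjoint $j_*$. Then $A \xla{j_*} E \xra{p} B$ is an abstract polynomial, and $\mathbb{H}_{\mathbf{1}}$ sends it to a functor isomorphic to $f$, because in $\mathrm{Cat}$ the right lifter of a right adjoint is its left adjoint (by the proposition identifying right lifters with right adjoints). This construction simultaneously supplies the action of a candidate quasi-inverse on 1-cells.

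The heart of the argument is fully faithfulness on 2-cells: for any pair of abstract polynomials $(j_*,E,p)$ and $(k_*,F,q)$ from $A$ to $B$, the induced functor from the hom-category of polynomial 2-cells to the hom-set of natural transformations $pj \Ra qk$ must be an equivalence of groupoids. A 2-cell of polynomials essentially consists of a functor $h : E \to F$ in $\mathrm{Cat}/B$ (i.e.\ carrying an invertible $qh \cong p$) together with comparison 2-cells on the right-lifter side, which under the adjunctions $j_*\dashv j$ and $k_*\dashv k$ is equivalent, by the mate calculus, to a single 2-cell $k \Ra hj$. Conversely, starting with $\alpha : pj \Ra qk$, one recovers $h$ (together with the compatibility data) by invoking FS1 of Theorem~\ref{main} applied to the ultimate morphism $j$ and the groupoid fibration $q$: the orthogonality bipullback \eqref{psorthog} identifies such triples with lifts $E \to F$, uniquely up to unique isomorphism, and the uniqueness clause yields both fullness and faithfulness.

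The main obstacle is the bookkeeping: teasing out the precise shape of a polynomial 2-cell (which combines right-lifter and groupoid-fibration data) and matching it bijectively to $\alpha$ via mate correspondences together with FS1. Once the matching is checked to be pseudonatural in $(j_*,E,p)$ and $(k_*,F,q)$, the coherent equivalence between $\mathbb{H}_{\mathbf{1}}$ composed with its quasi-inverse and the respective identities follows from the essential uniqueness clause of the factorization in Theorem~\ref{main}.
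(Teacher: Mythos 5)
Your first two steps are sound and match what the paper intends: the paper offers no written proof, treating the corollary as the $K=\mathbf{1}$ instance of the preceding proposition (quoted from Proposition 8.6 of \cite{134}) together with the observation that $\mathrm{Cat}(\mathbf{1},X)\simeq X$ identifies $\mathrm{rif}(j_*,-)$ with $j\circ(-)$ when $j_*\dashv j$, and that local essential surjectivity is exactly the content of Theorem~\ref{main} combined with the definition of abstract polynomial functor and the proposition identifying right lifters with right adjoints. So on objects and 1-cells you are doing precisely what the paper relies on.

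The gap is in your treatment of 2-cells, which is where all the remaining content lives. First, you never actually say what a 2-cell of $\mathrm{Poly}\mathrm{Cat}$ is; that definition is imported from \cite{134} and is not recorded in this paper, so the claim that it ``essentially consists of a functor $h:E\to F$ over $B$ together with comparison 2-cells on the right-lifter side'' is an assertion, not something you can derive from the material at hand, and the local full faithfulness of $\mathbb{H}_{\mathbf{1}}$ is exactly the statement that this data matches $\mathrm{Nat}(pj,qk)$ --- i.e.\ it is the thing to be proved, not a reformulation of it. Second, the mechanism you propose does not apply: the hom-categories of $\mathrm{Cat}_{\mathrm{apf}}$ consist of \emph{all} natural transformations $pj\Rightarrow qk$ (they are not groupoids, contrary to what you write), whereas FS1 and the bipullback \eqref{psorthog} only classify squares filled by \emph{invertible} 2-cells, since they are built from pseudopullbacks of hom-categories. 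So FS1 can recover a lift $E\to F$ from an invertible $\gamma: pu \Rightarrow vj$ (this is how it is used in the proof of Theorem~\ref{main}), but it says nothing about a general non-invertible $\alpha: pj\Rightarrow qk$, and the mate calculus alone will not convert $\alpha$ into such a square. The correct route is the one the paper points to: the local equivalence is established in \cite{134} for the general $\mathbb{H}_K$ using the universal property of right liftings and the explicit description of $\mathrm{Poly}\CM(P,Q)$ given there, and the corollary then only needs the object- and 1-cell-level observations you already have.
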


\begin{remark}
After my talk on this topic in the Workshop on Polynomial Functors 
\url{https://topos.site/p-func-2021-workshop/}, 
Paul Taylor kindly pointed out his 1988
preprint \cite{Tay1988} in which he distinguished parametric (or local) right adjoint functors with
motivation from proof theory and consequently calling them {\em stable} functors. 
His {\em trace} factorization for such a functor
is a right adjoint functor followed by a groupoid fibration. I am grateful to Clemens Berger for observing
that the groupoid fibrations so arising are a resticted class: their pseudofibres are coproducts
of codiscrete (chaotic) categories. However, it does show that every parametric right adjoint
functor provides an example of an abstract polynomial functor.
\end{remark}

\begin{center}
--------------------------------------------------------
\end{center}

\appendix

\end{document}